\newcommand{\fa}{\mathcal{A}^{\dagger}}
\newcommand{\fna}{\mathcal{NA}^{\dagger}}
\newcommand{\fla}{\mathcal{A}_{\rm Lk}^{\dagger}}
\theoremstyle{definition}
\newtheorem{definition}{Definition}[section]
\newtheorem{remark}[definition]{Remark}
\theoremstyle{plain}
\newtheorem{theorem}[definition]{Theorem}
\newtheorem{proposition}[definition]{Proposition}
\newtheorem{lemma}[definition]{Lemma}
\newtheorem{corollary}[definition]{Corollary}
\numberwithin{equation}{section}
\newtheorem{claim}[definition]{Claim}
\title[Automorphisms of fine curve graphs for nonorientable surfaces]{Automorphisms of fine curve graphs for nonorientable surfaces}
\author[M.~Kimura]{Mitsuaki Kimura}
\address[Mitsuaki Kimura]{Department of Mathematics, Osaka Dental University, 8-1 Kuzuha Hanazono-cho, Hirakata, Osaka 573-1121, Japan}
\email{kimura-m@cc.osaka-dent.ac.jp}
\author[E.~Kuno]{Erika Kuno}
\address[Erika Kuno]{Department of Mathematics,
Graduate School of Science,
Osaka University,
1-1 Machikaneyama-cho,  Toyonaka, Osaka 560-0043, Japan
}
\email{e-kuno@math.sci.osaka-u.ac.jp}
\date{\today}
\keywords{Homeomorphism groups; fine curve graphs; nonorientable surfaces}
\subjclass[2020]{20F65, 57K20, 57S05}
\begin{document}

\begin{abstract}
The fine curve graph of a surface was introduced by Bowden, Hensel, and Webb as a graph consisting of essential simple closed curves on the surface.
Long, Margalit, Pham, Verberne, and Yao proved that the automorphism group of the fine curve graph of a closed orientable surface is isomorphic to the homeomorphism group of the surface.
In this paper, based on their argument, we prove that the automorphism group of the fine curve graph of a closed nonorientable surface $N$ of genus $g \geq 4$ is isomorphic to the homeomorphism group of $N$.

\end{abstract}

\maketitle

\section{Introduction} \label{Intro}

Let $N=N_{g,p}^{b}$ be a connected nonorientable surface of genus $g\geq 1$ with $b\geq 0$ boundary components and $p\geq 0$ punctures, and $S=S_{g,p}^{b}$ a connected orientable surface of genus $g\geq 0$ with $b\geq 0$ boundary components and $p\geq 0$ punctures. 
Note that $N_{g,p}^{b}$ is homeomorphic to the surface obtained from a sphere with $p$ punctures by removing $g+b$ open disks and attaching $g$ M\"{o}bius bands along their boundaries, and we call each of the M\"{o}bius bands a \emph{crosscap}. 
We drop the subscript $b$ (resp. $p$) from $N_{g,p}^{b}$ and $S_{g,p}^{b}$ if $b=0$ (resp. $p=0$).
In particular, $N_g=N_{g,0}^0$ and $S_g=S_{g,0}^0$ denote closed surfaces.
If we do not care whether the surface is orientable, we write $F$ for the surface. 

In \cite{Bowden--Hensel--Webb22}, Bowden, Hensel, and Webb introduced the {\it fine curve graph} $\mathcal{C}^{\dagger}(F)$ of a surface $F$ in order to study the homeomorphism group $\mathrm{Homeo}(F)$ and the diffeomorphism group $\mathrm{Diff}(F)$ of $F$. Here, the fine curve graph $\mathcal{C}^{\dagger}(F)$ of a surface $F$ is the graph whose vertices are essential simple closed curves in $F$ and whose edges correspond to pairs of vertices that are disjoint in $F$.
The action of $\mathrm{Homeo}(F)$ on $\mathcal{C}^{\dagger}(F)$ induces the natural map $\eta\colon\mathrm{Homeo}(F)\rightarrow\mathrm{Aut}(\mathcal{C}^{\dagger}(F))$.

In \cite{Long--Margalit--Pham--Verberne--Yao}, Long, Margalit, Pham, Verberne and Yao proved that the natural map $\eta\colon\mathrm{Homeo}(S_{g})\rightarrow\mathrm{Aut}(\mathcal{C}^{\dagger}(S_{g}))$ is an isomorphism for $g\geq 2$.
In this paper, we extend their result to nonorientable surfaces. Namely, we prove the following:

\begin{theorem}\label{thm:main}
For $g\geq 4$, the natural map $\eta\colon\mathrm{Homeo}(N_{g})\rightarrow\mathrm{Aut}(\mathcal{C}^{\dagger}(N_{g}))$ is an isomorphism.
\end{theorem}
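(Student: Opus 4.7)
The plan is to mirror the approach of Long--Margalit--Pham--Verberne--Yao (LMPVY) used to prove Theorem~\ref{thm:ori}, while isolating and handling the points where nonorientability forces genuine modifications. Injectivity of $\eta$ is the easier half: if $h \in \mathrm{Homeo}(N_g)$ preserves every essential simple closed curve setwise, then for any distinct points $x,y \in N_g$ one can produce an essential simple closed curve containing $x$ but not $y$, so $h$ fixes a dense subset of $N_g$ and is therefore the identity. The heart of the theorem is surjectivity: given an automorphism $\varphi$ of $\mathcal{C}^{\dagger}(N_g)$, one must construct a homeomorphism $h \in \mathrm{Homeo}(N_g)$ realizing $\varphi$.

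The surjectivity argument is built around a combinatorial reconstruction of the surface from the fine curve graph. First I would verify that $\varphi$ preserves the topological type of each curve---one-sided, two-sided separating, or two-sided non-separating---together with the topological type of its complement. These invariants are readable from the link of a vertex in $\mathcal{C}^{\dagger}(N_g)$ through clique-counting arguments and detection of the nonorientable genus of the complement via maximal pairwise-disjoint families of curves of prescribed types. Next, I would prove the nonorientable analogue of the LMPVY criterion giving a purely combinatorial characterization, in terms of common neighbors in $\mathcal{C}^{\dagger}(N_g)$, of pairs of two-sided non-separating curves that meet transversely at a single point. Using this, to each point $p \in N_g$ one associates such a pair $\alpha,\beta$ meeting only at $p$, sets $h(p) := \varphi(\alpha) \cap \varphi(\beta)$, and checks that $h(p)$ is independent of the choice of $\alpha,\beta$; this yields a bijection $h \colon N_g \to N_g$ with $\varphi(c) = h(c)$ for every vertex $c$.

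The remaining and most delicate task is promoting $h$ from a set-theoretic bijection to an actual homeomorphism. Following LMPVY in spirit, I would combine point-set topological arguments of Bing type with a descent: $\varphi$ induces an automorphism of the ordinary curve graph $\mathcal{C}(N_g)$, which by known results on automorphism groups of curve graphs of nonorientable surfaces is realized by a genuine homeomorphism $f$; then $f^{-1} \circ h$ preserves every curve setwise and so equals the identity by injectivity, forcing $h = f$. The hypothesis $g \geq 4$ should enter precisely at this step, guaranteeing both the validity of the abstract curve-graph theorem and enough room in $N_g$ to deploy the combinatorial configurations used throughout. The main obstacle I foresee is the local analysis at points lying on, or near, a one-sided curve: two-sided test curves behave asymmetrically across a crosscap, so the neighborhood bases used to verify continuity of $h$ must be chosen with additional care, and the combinatorial characterization of transverse single-point intersection must be made robust against the presence of nearby one-sided curves. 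This combinatorial control near crosscaps is where I expect the bulk of the new work to concentrate.
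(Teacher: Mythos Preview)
Your plan to encode points of $N_g$ as transverse single-point intersections of pairs of curves is sound in spirit, but the step you propose for upgrading the bijection $h$ to a homeomorphism has a genuine gap. The descent to the ordinary curve graph $\mathcal{C}(N_g)$ yields only a mapping class: after choosing a representative $f$, you know only that $f(c)$ is \emph{isotopic} to $\varphi(c)=h(c)$ for each essential curve $c$, not that $f(c)=h(c)$ as subsets of $N_g$. Hence $f^{-1}\circ h$ sends each curve to an isotopic curve, not to itself, and the injectivity lemma does not apply. Finding a homeomorphism that agrees with $\varphi$ on actual curves rather than on isotopy classes is precisely the content of the theorem, so this step is circular. (As a secondary point, the Atalan--Korkmaz theorem for $\mathcal{C}(N_g)$ needs $g\geq 5$, so even were the logic sound you would lose $g=4$.)

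The paper avoids this entirely by never passing to $\mathcal{C}(N_g)$. Instead it adjoins the inessential curves bounding disks to form the extended fine curve graph $\mathcal{EC}^\dagger(N_g)$, shows that every automorphism of $\mathcal{C}^\dagger(N_g)$ extends to one of $\mathcal{EC}^\dagger(N_g)$ (via the combinatorial preservation of nonseparating bigon pairs and linked sharing pairs), and proves directly that $\mathrm{Aut}(\mathcal{EC}^\dagger(N_g))\cong\mathrm{Homeo}(N_g)$ by encoding each point of $N_g$ as the limit of a convergent sequence of shrinking disk-bounding curves; continuity of the resulting bijection then follows from first countability. The hypothesis $g\geq 4$ enters not through any rigidity theorem for $\mathcal{C}(N_g)$ but in the extension step: one must produce a two-sided nonseparating bigon pair disjoint from an arbitrary nonseparating curve $c$, and for $g\leq 3$ the complement $N_g\setminus c$ can be $N_1^1$, $N_1^2$, $S_0^1$, or $S_0^2$, none of which carry essential two-sided curves.
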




The result of \cite{Long--Margalit--Pham--Verberne--Yao} can be thought of as an analogy to the classical result of Ivanov~\cite{Ivanov97}, which states that the automorphism group ${\rm Aut}(\mathcal{C}(S_g))$ of the ordinary curve graph $\mathcal{C}(S_g)$ is isomorphic to the extended mapping class group $\mathrm{Mod}^{\pm}(S_g)$ of $S_g$ if $g \geq 3$. Similarly, Theorem~\ref{thm:main} can also be thought of as an analogy to the result of Atalan and Korkmaz~\cite{Atalan--Korkmaz14}, which states that the automorphism group ${\rm Aut}(\mathcal{C}(N_g))$ is isomorphic to the mapping class group $\mathrm{Mod}(N_g)$ if $g \geq 5$.

To prove Theorem \ref{thm:main}, we apply the argument in \cite{Long--Margalit--Pham--Verberne--Yao}, but with some modifications.
For nonorientable surfaces, we need to consider the following differences:


\begin{itemize}
\item Not only two-sided curves but also one-sided curves appear on a nonorientable surface. Thus, it is necessary to properly define whether one-sided curves are allowed or not for several concepts such as for torus pairs, pants pairs, and bigon pairs (see Subsection~\ref{notations}). 
We also observe that automorphisms of $\mathcal{C}^{\dagger}(N)$ preserve two-sidedness of curves (Lemma~\ref{presv_two-sided}).

\item Inessential simple closed curves of nonorientable surfaces consist not only of curves bounding a disk but also of curves bounding a M\"obius band. It affects the proofs of Lemmas~\ref{presv_torus_pairs} and \ref{presv_annulus_sets} for example. 
We also have to consider whether or not curves bounding a M\"obius band are allowed  in our definition of the extended fine curve graph $\mathcal{EC}^{\dagger}(N)$ (see Remark~\ref{remark_for_extended_graph}).
\end{itemize}

In related research, Roux and Wolff~\cite{Roux--Wolff22} considered a variant $\mathcal{NC}_{\pitchfork}^{\dagger}(F)$ of the fine curve graph and its automorphism group. They proved that ${\rm Aut}(\mathcal{NC}_{\pitchfork}^{\dagger}(F))$ is isomorphic to ${\rm Homeo}(F)$ for every nonspherical surface (i.e., surfaces not embeddable in the 2-sphere) without boundary, orientable or not, compact or not.
In \cite{Roux--Wolff22}, they mentioned the possibility of proving 
the result of \cite{Long--Margalit--Pham--Verberne--Yao} 
via the graph $\mathcal{NC}_{\pitchfork}^{\dagger}(F)$. 
We hope that Theorem \ref{thm:main} could also be approached with the same strategy.

\section{Preliminaries}\label{preliminaries}

\subsection{Nonorientable surfaces}\label{notations}

Throughout this paper, a curve on $F$ means a simple closed curve on $F$ unless otherwise noted.
A curve $c$ on a surface $F$ is said to be {\it one-sided} if a regular neighborhood of $c$ is a M\"obius band, and $c$ is said to be {\it two-sided} if a regular neighborhood of $c$ is an annulus. 
Every curve in an oriented surface is two-sided.
We remark that a curve $c$ on $N$ is one-sided if and only if $c$ goes through crosscaps odd number of times, and two-sided if and only if $c$ goes through crosscaps even number of times.

For an orientable surface $S=S_g^b$ with $g\geq 1$, 
the surface obtained by cutting $S$ along a nonseparating curve is homeomorphic to $S_{g-1}^{b+2}$.
For a nonorientable surface $N=N_g^b$ with $g\geq 1$, let $c$ a nonseparating curve in $N$, and let $F$ denote the surface obtained by cutting $N$ along $c$. Then, 
    \begin{itemize}
        \item $F$ is homeomorphic to $N_{g-1}^{b+1}$ or $S_{\frac{g-1}{2}}^{b+1}$ if $c$ is one-sided, and
        \item $F$ is homeomorphic to $N_{g-2}^{b+2}$ or $S_{\frac{g-2}{2}}^{b+2}$ if $c$ is two-sided.
    \end{itemize}

The following lemma, which will be used in the proof of Lemma \ref{presv_two-sided}, can be seen immediately from the above facts.

\begin{lemma}\label{lem:genus_cutting}
 Let $c$ be a nonseparating curve in a surface $F_g^b$ with $g\geq 1$.
 Assume that the surface obtained by cutting $F_g^b$ along $c$ is homeomorphic to $F_{g'}^{b'}$. Then $g > g'$ holds. 
 Moreover, if $F_g^b$ is nonorientable and $g-g'=1$, then $c$ is one-sided.
\end{lemma}

We can also observe the following fact. We will use it to prove Lemma~\ref{presv_annulus_sets}~(1).

\begin{lemma}\label{lem:cutting_separate}
 Let $a$ and $b$ be nonseparating curves in a surface $N_g$ with $g\geq 3$.
 Assume that the surface obtained by cutting $N_g$ along $a$ and $b$ has two connected components $F$ and $F'$. 
Then, there exists a separating essential curve in $N_g$ that lies in $F$ if and only if $F$ is homeomorphic to neither $S_0^2$ nor $N_1^2$.
\end{lemma}

\subsection{Torus pairs, pants pairs and bigon pairs}

We define several concepts that appear in \cite{Long--Margalit--Pham--Verberne--Yao} for nonorientable surfaces.
We say that curves $c$ and $d$ on $N$ are {\it noncrossing at a component $a$}  of $c\cap d$ if there are a neighborhood $U$ of $a$ and a homeomorphism $U\rightarrow\mathbb{R}^{2}$ so that the image of $c\cap U$ and $d\cap U$ lie in the (closed) upper and lower half plane of $\mathbb{R}^{2}$, respectively (otherwise, $c$ and $d$ are called {\it crossing at $a$}).
The curves $c$ and $d$ are {\it noncrossing} if they are noncrossing at any component of $c\cap d$. 

Let $c$ and $d$ be curves on $N$. We say that a pair $\{c, d\}$ of essential simple closed curves in $N$ is:

\begin{itemize}
\item a {\it torus pair} if $c$ and $d$ are both two-sided, $c\cap d$ is a single interval, and $c$ and $d$ are crossing at that interval.
\item a {\it pants pair} if $c$ and $d$ are both two-sided, $c\cap d$ is a single interval, $c$ and $d$ are noncrossing at that interval, and $c$ and $d$ are not homotopic.
\item {\it bigon pair} if $c$ and $d$ are both two-sided, $c\cap d$ is just one nontrivial closed interval, and $c$ and $d$ are homotopic.
\end{itemize}


See Remark \ref{obstruction_in_step_1} for a discussion of why the definitions of torus pairs, pants pairs, and bigon pairs are restricted to two sided curves.

A torus pair or a pants pair $\{c, d\}$ is {\it degenerate} if $c\cap d$ is a single point.
If $\{c, d\}$ is a nondegenerate torus pair, then the curve $\overline{c\bigtriangleup d}\coloneqq \overline{c\cup d - c\cap d}$ is the exactly one other essential simple closed curve $e$ which is contained in $c\cup d$, and we refer to $\{c,d,e\}$ as a {\it torus triple}. We remark that if $\{c, d\}$ is a nondegenerate torus pair, then the curve $\overline{c\bigtriangleup d}$ is two-sided. 
We can see this as follows: If the arc $c\cap d$ passes the crosscaps odd (resp. even) number of times, then both arcs $c-c\cap d$ and $d-c\cap d$ also pass crosscaps odd (resp. even) number of times since $c$ and $d$ are both two-sided curves. Then we see that the curve $\overline{c\bigtriangleup d} = \overline{(c-c\cap d) \cup (d-c\cap d)}$ is also two-sided since it passes the crosscaps even number of times. 

If $\{c, d\}$ is a bigon pair, then $e\coloneqq\overline{c\bigtriangleup d}$ is a simple closed curve bounding a disk. When the two curves in a bigon pair are nonseparating, we refer the pair to a {\it nonseparating bigon pair}.

Moreover, we suppose that bigon pairs $\{c,d\}$ and $\{c', d'\}$ determine the same inessential curve $e$ bounding a disk. We say that the pair of bigon pairs $\{\{c,d\}, \{c', d'\}\}$ is a {\it sharing pair} if the corresponding arcs connecting $e$ have disjoint interiors. The sharing pair is {\it linked} if the corresponding arcs are linked at $e$, which means that a curve parallel to $e$ and sufficiently close to $e$ intersects the arcs alternately.   

\subsection{Graph theory}
In the next section, we will consider claims of the type that isomorphisms of fine curve graphs preserve certain properties of curves. 
The proof of these claims is made by reformulating the topological conditions for curves into graph-theoretic conditions for fine curve graphs. Here, we summarize the notion of join and link that will appear later.

A graph is a {\it join} if its vertices are partitioned into two or more nonempty sets such that each pair of vertices between different sets is connected by an edge. 

The {\it link} of a set $A$ of vertices in a graph is the subgraph spanned by the set of vertices that are not in $A$ and are connected by an edge to each vertex in $A$. For example, for $c,d_1,\dots,d_k \in \mathcal{C}^{\dagger}(F)$, $c$ is a vertex of the link of $\{d_1,\dots,d_k\}$ in $\mathcal{C}^{\dagger}(F)$ if and only if $c$ and $d_i$ are disjoint for each $i=1,\dots,k$.

\section{Key propositions}

In this section, we provide an outline of the following key propositions for the proof of Theorem~\ref{thm:main}.



\begin{proposition}[cf. {\cite[Proposition 2.1]{Long--Margalit--Pham--Verberne--Yao}}]\label{presv_bigon_pairs}
Let $N=N_{g}$ with $g\geq 4$. Then every automorphism of $\mathcal{C}^{\dagger}(N)$ preserves the set of nonseparating bigon pairs.
\end{proposition}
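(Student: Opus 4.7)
The plan is to give a purely graph-theoretic (adjacency-based) characterization of nonseparating bigon pairs in $\mathcal{C}^{\dagger}(N)$, so that any automorphism $\varphi$ of $\mathcal{C}^{\dagger}(N)$ must preserve this class. The overall strategy mirrors that of \cite{Long--Margalit--Pham--Verberne--Yao}, but each step requires care in the nonorientable setting.

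First I would stratify the candidate pairs. By Lemma~\ref{presv_torus_pairs}, $\varphi$ preserves torus pairs and hence excludes them. Among the remaining pairs $\{c,d\}$ whose local intersection pattern is "a single arc of noncrossing intersection", which I expect to be detectable via the hull-preservation machinery of Lemma~\ref{presv_hulls} applied to small two-sided configurations around $c \cup d$, I must separate bigon pairs from pants pairs. The key distinction is that for a bigon pair the curve $\overline{c \bigtriangleup d}$ bounds a disk and therefore is \emph{not} a vertex of $\mathcal{C}^{\dagger}(N)$, whereas for a pants pair $\overline{c \bigtriangleup d}$ is an essential simple closed curve and does appear as a vertex.

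Next I would translate this inessentiality into an adjacency condition. Concretely, a pants pair admits a common neighbor in $\mathcal{C}^{\dagger}(N)$ isotopic to the "third curve" $\overline{c \bigtriangleup d}$, while for a bigon pair every essential curve disjoint from both $c$ and $d$ can be realized disjointly from $\overline{c \bigtriangleup d}$ as well; this should be expressible as a first-order condition on the joint neighborhood of $\{c,d\}$ in the graph, analogous to \cite[Proposition~2.1]{Long--Margalit--Pham--Verberne--Yao}. Once bigon pairs are identified combinatorially, the nonseparating subclass is singled out by the standard graph-theoretic criterion for nonseparability (for instance, via the structure of the link of a vertex, or by applying a preservation statement for nonseparating curves that one extracts from Lemmas~\ref{presv_hulls} and \ref{presv_torus_pairs}).

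The main obstacle I expect is ruling out pathologies caused by one-sided curves. Since bigon pairs are defined to consist of two-sided curves (Subsection~\ref{notations}), I must verify that no pair involving a one-sided curve satisfies the proposed combinatorial description; this is precisely the sort of difficulty flagged in Remark~\ref{obstruction_in_step_1} for the companion argument about torus pairs, and the fix will likely require an explicit local analysis in a regular neighborhood of any candidate one-sided curve, showing that the would-be "third essential curve" $\overline{c \bigtriangleup d}$ behaves incompatibly with the bigon-pair adjacency pattern. If that obstruction can be dispatched, the resulting characterization is preserved by $\varphi$, which yields the proposition.
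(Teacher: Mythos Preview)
Your plan diverges from the paper's route and, more importantly, has a genuine gap at its central step.

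The paper (following \cite{Long--Margalit--Pham--Verberne--Yao}) does \emph{not} separate bigon pairs from pants pairs by analyzing the essentiality of $\overline{c\bigtriangleup d}$. Instead it passes through the annulus machinery of Lemmas~\ref{presv_annulus_sets} and~\ref{presv_annulus_pairs}: one first shows that automorphisms preserve nonseparating noncrossing annulus pairs (homotopic two-sided nonseparating curves with only noncrossing intersection), via the annulus sets $\mathcal{C}^{\dagger}(a,b)$ and their partial order, and then uses the type~1/type~2 dichotomy for degenerate torus pairs meeting such a pair to pin down that $c\cap d$ is a single arc. The commented note in Subsection~\ref{notations} makes this explicit: ``we characterize bigon pairs by using annulus pairs.'' Your outline never invokes Lemma~\ref{presv_annulus_sets} or~\ref{presv_annulus_pairs}, which are exactly the ingredients the paper lists as needed.

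The specific gap in your plan is the ``translate inessentiality into an adjacency condition'' step. You write that a pants pair ``admits a common neighbor in $\mathcal{C}^{\dagger}(N)$ isotopic to the third curve $\overline{c\bigtriangleup d}$,'' but $\overline{c\bigtriangleup d}\subset c\cup d$, so it is never a neighbor of $c$ or $d$; it lies in the \emph{hull} of $\{c,d\}$, not in its link. If you switch to the hull criterion, Step~1 of Lemma~\ref{presv_torus_pairs} shows that pairs whose hull contains at most one extra vertex are exactly the torus and pants pairs; a bigon pair, by contrast, has a disk in its hull and hence infinitely many extra vertices---the same as a generic intersecting pair. So the hull count isolates torus$\cup$pants from everything else, but gives you no handle distinguishing bigon pairs inside that ``everything else.'' Your proposed adjacency statement for bigon pairs (``every essential curve disjoint from $c$ and $d$ can be realized disjointly from $\overline{c\bigtriangleup d}$'') is automatic for \emph{any} pair, bigon or pants, since $\overline{c\bigtriangleup d}\subset c\cup d$; it does not discriminate. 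To make progress you would need a way to detect, purely in $\mathcal{C}^{\dagger}(N)$, that $c$ and $d$ are homotopic---and that is precisely what the annulus-set argument supplies.
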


\begin{proposition}[cf. {\cite[Proposition 2.2]{Long--Margalit--Pham--Verberne--Yao}}]\label{presv_linked_sharing_pairs}
Let $N=N_{g}$ with $g\geq 4$. Then every automorphism of $\mathcal{C}^{\dagger}(N)$ preserves the set of linked sharing pairs.
\end{proposition}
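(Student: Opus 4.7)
The plan is to apply Proposition~\ref{presv_bigon_pairs} to reduce to two nonseparating bigon pairs, and then to give a purely adjacency-based characterization of the linked sharing pair property that is manifestly invariant under automorphisms. Given a linked sharing pair $\{\{c,d\},\{c',d'\}\}$ with shared inessential disk-bounding curve $e = \overline{c\bigtriangleup d} = \overline{c'\bigtriangleup d'}$ and an automorphism $\varphi$ of $\mathcal{C}^{\dagger}(N)$, Proposition~\ref{presv_bigon_pairs} gives that $\{\varphi(c),\varphi(d)\}$ and $\{\varphi(c'),\varphi(d')\}$ are each nonseparating bigon pairs. It remains to check (i) that these two image bigon pairs form a sharing pair, and (ii) that the resulting sharing pair is linked.

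For (i), I would use the following combinatorial detection of the shared disk. An essential curve $f$ is disjoint from both $c$ and $d$ if and only if $f$ lies in the complement of $D \cup \alpha$, where $D$ is the disk bounded by $e$ and $\alpha = c \cap d$ is the shared arc sitting outside $D$. Thus the common neighborhood of $c$ and $d$ in $\mathcal{C}^{\dagger}(N)$ encodes the topology of $N\setminus(D\cup\alpha)$, and comparing this common neighborhood with that of $\{c',d'\}$ recovers a graph-theoretic criterion for when the two bigon pairs share the same disk. The disjoint-interior condition on $\alpha$ and $\alpha'$ is phrased similarly via further witness curves.

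For (ii), I would establish the following characterization: a sharing pair $\{\{c,d\},\{c',d'\}\}$ is linked if and only if each of the four cross pairs $\{c,c'\}$, $\{c,d'\}$, $\{d,c'\}$, $\{d,d'\}$ is a non-edge of $\mathcal{C}^{\dagger}(N)$. Writing $c=\alpha\cup\beta_c$, $d=\alpha\cup\beta_d$ and $c'=\alpha'\cup\beta_{c'}$, $d'=\alpha'\cup\beta_{d'}$ with $\beta_c\cup\beta_d=e=\beta_{c'}\cup\beta_{d'}$, the linked case forces the endpoints of $\alpha$ and $\alpha'$ to interleave on $e$, so each of $\beta_c,\beta_d$ meets each of $\beta_{c'},\beta_{d'}$ in a nondegenerate sub-arc of $e$, giving the four required non-edges. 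In the unlinked case, one may relabel so that $\beta_c$ and $\beta_{c'}$ are disjoint sub-arcs of $e$, whence $c\cap c'=\emptyset$ and $\{c,c'\}$ is an edge. Since $\varphi$ preserves adjacency, this fourfold non-adjacency condition is preserved.

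The main obstacle will be the combinatorial identification in (i) in the nonorientable setting, where the complement $N\setminus D$ can contain one-sided essential curves. Because bigon pairs are restricted to two-sided curves by definition (see Subsection~\ref{notations}), the inessential curve $e$ bounds a genuine disk rather than a M\"obius band, so the common-neighborhood argument of \cite{Long--Margalit--Pham--Verberne--Yao} transplants intact to our setting. One only needs to verify that the presence of one-sided essential common neighbors of $\{c,d\}$ does not spoil the characterization, which holds since such curves still lie in $N\setminus(D\cup\alpha)$ exactly as the two-sided ones do.
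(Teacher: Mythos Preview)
Your characterization in (ii) --- that a sharing pair is linked if and only if all four cross pairs $\{c,c'\},\{c,d'\},\{d,c'\},\{d,d'\}$ are non-edges --- is correct and tidy. The gap is entirely in (i). You assert that ``comparing the common neighborhood of $\{c,d\}$ with that of $\{c',d'\}$ recovers a graph-theoretic criterion for when the two bigon pairs share the same disk,'' and that the disjoint-interior condition can be phrased ``via further witness curves,'' but you never say what these criteria are. The common neighbors of $\{c,d\}$ are exactly the essential curves in $N\setminus(\overline{D}\cup\alpha)$, and this set depends on the arc $\alpha$ as much as on the disk $D$; it is not clear how to isolate the condition $D=D'$ from such data, and no candidate criterion is offered. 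Since your (ii) is explicitly conditional on the image being a sharing pair, and since ``all four cross pairs are non-edges'' does \emph{not} by itself force two nonseparating bigon pairs to form a sharing pair (take two unrelated bigon pairs whose arcs $c\cap d$ and $c'\cap d'$ happen to cross), the proof as written is incomplete.

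The paper's proof, deferred to \cite{Long--Margalit--Pham--Verberne--Yao}, draws on the finer invariants built up in Section~\ref{preliminaries}: torus pairs (Lemma~\ref{presv_torus_pairs}) and type~1/type~2 curves for noncrossing annulus pairs (Lemma~\ref{presv_annulus_pairs}), not merely raw adjacency. Those tools let one pin down the shared-disk and linked conditions via the precise intersection types among the curves involved, rather than having to first establish ``sharing'' by an unspecified common-neighborhood comparison. If you want to rescue your outline, you would need either to supply an explicit adjacency criterion for (i), or to strengthen (ii) to a characterization of linked sharing pairs among all pairs of nonseparating bigon pairs.
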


 We also list the lemmas required for the key propositions.
In this subsection, all propositions and lemmas except Lemma \ref{presv_two-sided} are taken from \cite[Section 2]{Long--Margalit--Pham--Verberne--Yao}, with the assumption of orientable surfaces changed to nonorientable surfaces. 


A {\it multicurve} is a finite collection of pairwise disjoint essential simple closed curves in $N$. A multicurve is {\it separating} if its complement has more than one component. We say that two curves $a$ and $b$ lie on the {\it same side} of a separating multicurve $m$ if they are disjoint from $m$ and lie in the same complementary component. We remark that a separating multicurve $m$ should contain at least one two-sided curve. 

\begin{lemma}[cf. {\cite[Lemma 2.3]{Long--Margalit--Pham--Verberne--Yao}}]\label{presv_separating_multi_curves}
Let $N=N_{g}$ with $g\geq 3$. Then every automorphism $\alpha$ of $\mathcal{C}^{\dagger}(N)$ preserves:
\begin{itemize}
\item the set of separating curves in $\mathcal{C}^{\dagger}(N)$, and
\item the set of separating multicurves in $\mathcal{C}^{\dagger}(N)$.
\end{itemize}
Moreover, $\alpha$ preserves the sides of separating multicurves, that is, $a$ and $b$ lie on the same side of a separating multicurve $m$ if and only if $\alpha(a)$ and $\alpha(b)$ lie on the same side of $\alpha(m)$. 
\end{lemma}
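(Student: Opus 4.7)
The plan is to translate the notions ``multicurve'', ``separating multicurve'', and ``same side'' into purely combinatorial conditions on the adjacency of $\mathcal{C}^{\dagger}(N)$, so that any automorphism $\alpha$ preserves them automatically. First, a multicurve is exactly a finite clique in $\mathcal{C}^{\dagger}(N)$, hence is preserved by $\alpha$; the statement about a single separating curve then follows as the size-one case of the statement about separating multicurves.

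For a multicurve $m$, let $\mathrm{Lk}(m)$ denote the set of vertices of $\mathcal{C}^{\dagger}(N)$ adjacent to every vertex of $m$, i.e., the essential simple closed curves disjoint from $m$. I claim that $m$ is separating if and only if there exist $a, b \in \mathrm{Lk}(m)$ such that every $c \in \mathrm{Lk}(m)$ is adjacent to $a$ or to $b$. For the forward direction, write $N \setminus m = N_{1} \sqcup \cdots \sqcup N_{k}$ with $k \geq 2$ and pick essential simple closed curves $a \subset \mathrm{int}\, N_{1}$ and $b \subset \mathrm{int}\, N_{2}$; these exist since no component of $N \setminus m$ can be a disk or a M\"obius band (any such component would force a component of $m$ to be inessential). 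Then any $c \in \mathrm{Lk}(m)$ lies in a single component $N_{i}$ and is therefore disjoint from at least one of $a, b$. For the converse, when $m$ is nonseparating the complement $N \setminus m$ is connected and, under $g \geq 3$ together with the fact that a separating multicurve must contain a two-sided curve (noted in the text), is topologically rich enough that for any $a, b \in \mathrm{Lk}(m)$ one can produce $c \in \mathrm{Lk}(m)$ meeting both $a$ and $b$. Since this characterization is stated purely in terms of adjacency, $\alpha$ preserves the class of separating multicurves.

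For same-sidedness, I claim that for a separating $m$, two curves $a, b \in \mathrm{Lk}(m)$ lie on the same side if and only if there exists $c \in \mathrm{Lk}(m)$ nonadjacent to both. The ``only if'' direction is immediate, since any $c$ disjoint from $m$ lies in a single component of $N \setminus m$ and hence cannot meet curves in different components. The ``if'' direction again uses the topological existence claim, now within a single component $N_{i}$ containing both $a$ and $b$. As before, this condition is invariant under any automorphism of $\mathcal{C}^{\dagger}(N)$, so $\alpha$ preserves the sides of every separating multicurve.

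The chief obstacle is the existence statement used in the previous two paragraphs: given two essential simple closed curves $a, b$ inside a connected, possibly very simple subsurface $R$ of $N \setminus m$ (for instance a pair of pants, an annulus, or their nonorientable analogues appearing after cutting along a pants-like decomposition), produce an essential simple closed curve $c \subset R$ crossing both. This is exactly where working in the fine curve graph pays off: unlike in the ordinary curve graph, $\mathcal{C}^{\dagger}(N)$ admits arbitrarily wild embedded representatives of any isotopy class, so even when the isotopy classes of essential curves inside $R$ are highly restricted one may realize $c$ as a loop making long excursions across both $a$ and $b$. Carrying out this construction uniformly in all the minimal cases that arise in a closed nonorientable surface of genus $g \geq 3$ is the point where care is required.
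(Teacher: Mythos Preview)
Your proposal is correct and follows the same approach the paper adopts by reference to Long--Margalit--Pham--Verberne--Yao: encode ``separating multicurve'' and ``same side'' as link conditions in $\mathcal{C}^{\dagger}(N)$, with the only substantive step being the existence, inside any connected complementary region, of an essential curve crossing two prescribed curves---exactly where the flexibility of the fine graph is used. One minor slip: in your same-side paragraph the labels ``if'' and ``only if'' are interchanged, though the arguments themselves are in the right places.
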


Lemma~\ref{presv_separating_multi_curves} can be proved as in \cite{Long--Margalit--Pham--Verberne--Yao} in the following way: 
We can observe that a multicurves $m$ is separating if and only if the link of $m$ is a join; this shows the former claim.
    The latter claim follows from the fact that two curves are on the same side of $m$ if and only if they belong to the same set in the partition of the link of $m$ as a join.


We define the {\it hull} of a collection of curves in a surface to be the union of the curves along with any embedded disks bounded by the curves. 


\begin{lemma}[cf. {\cite[Lemma 2.4]{Long--Margalit--Pham--Verberne--Yao}}]\label{presv_hulls}
Let $N=N_{g}$ with $g\geq 2$, $X$ a finite set of vertices represented by two-sided curves on $N$ of $\mathcal{C}^{\dagger}(N)$, and $d$ a vertex of $\mathcal{C}^{\dagger}(N)$. Then, $d$ lies in the hull of $X$ if and only if the link of $d$ contains the link of $X$. 
In particular, if $d$ lies in the hull of $X$, then $\alpha(d)$ lies in the hull of $\alpha(X)$ for every $\alpha \in \mathrm{Aut}(\mathcal{C}^{\dagger}(N))$.
\end{lemma}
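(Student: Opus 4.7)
The plan is to prove the biconditional; the ``in particular'' conclusion then follows immediately because any automorphism of $\mathcal{C}^{\dagger}(N)$ preserves adjacency, and hence preserves links. As preparation, observe that cutting $N$ along the (finitely many) curves in $X$ decomposes $N \setminus \bigcup_{c \in X} c$ into open subsurfaces, and that the hull of $X$ is the union of these curves together with the components whose closures are disks. For the forward direction, take any $e$ in the link of $X$, so $e$ is an essential simple closed curve lying in some component $V$ of $N \setminus \bigcup_{c \in X} c$. If $V$ were a disk component, $e$ would be contained in a disk and hence null-homotopic in $N$, contradicting essentialness; so $V$ is a non-disk component and therefore disjoint from the hull of $X$. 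Combined with $d$ being contained in the hull, this gives $e \cap d = \emptyset$, so $e$ lies in the link of $d$.

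For the converse I argue the contrapositive: suppose $d$ is not contained in the hull of $X$, so some point of $d$ lies in a non-disk component $U$ of $N \setminus \bigcup_{c \in X} c$. I produce a curve $e$ in the link of $X$ but not the link of $d$, in two cases. If $d$ meets some $c_0 \in X$, let $\gamma$ be an arc of $d \cap U$ with endpoint $q \in c_0$; since $c_0$ is two-sided, it has a bicollar neighborhood in $N$, and a sufficiently small parallel pushoff $e$ of $c_0$ into $U$ is a simple closed curve isotopic to $c_0$ (hence essential), lies in $U$ (hence is disjoint from $X$), and crosses $\gamma$ near $q$. If instead $d$ is disjoint from $\bigcup_{c \in X} c$, so $d$ lies entirely in $U$, then a tubular neighborhood $\nu(d) \subseteq U$ of $d$ is either an annulus (if $d$ is two-sided) or a M\"obius band (if $d$ is one-sided); in either case I can choose a ``wiggling'' simple closed curve $e \subseteq \nu(d)$ that is freely homotopic to $d$ (hence essential) and transversely intersects $d$. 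In both sub-cases $e \subseteq U$ is disjoint from $X$, as required.

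The main obstacle, and the precise reason why $X$ is assumed to consist of two-sided curves, is the pushoff step: a bicollar neighborhood, from which one obtains a parallel copy, exists only for a two-sided curve. For a one-sided $c_0$ the analogous construction returns the boundary of a M\"obius-band neighborhood, which under this paper's convention is inessential (it bounds a M\"obius band) and so is not a vertex of $\mathcal{C}^{\dagger}(N)$; the argument then breaks down, as flagged in Remark~\ref{remark_presv_hulls}.
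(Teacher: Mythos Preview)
Your forward direction and your Case~2 of the converse are correct and essentially match the paper's argument. Case~1, however, has a genuine gap. The curves in $X$ are not assumed to be pairwise disjoint; indeed, the principal application of this lemma is to torus and pants pairs, whose two curves share an arc. If some other $c_1 \in X$ meets $c_0$, then every bicollar of $c_0$ in $N$ also meets $c_1$, and so any parallel pushoff of $c_0$, however small, will intersect $c_1$. Thus your curve $e$ need not lie in $U$ and need not belong to the link of $X$.

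The paper's route for the converse avoids this by working with the non-disk component $R=U$ itself rather than with a single curve $c_0$: if $R$ has positive (orientable or nonorientable) genus it visibly contains a curve essential in $N$; if $R$ has genus zero, then a curve in $R$ parallel to a boundary component of $R$---in general a concatenation of arcs drawn from several curves of $X$, not all of $c_0$---is essential in $N$. Two-sidedness is invoked at this last step, and for a reason slightly different from the one you give: it guarantees that such a boundary-parallel curve does not bound a M\"obius band on the far side (Remark~\ref{remark_presv_hulls}). Your pushoff idea can be repaired by pushing off the boundary component of $\bar U$ through $q$ instead of $c_0$ itself; that lands in $U$ automatically, but you then owe the essentiality argument just described.
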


Note that we assume that $X$ consists only of two-sided curves in Lemma \ref{presv_hulls}. With this restriction, we can prove Lemma \ref{presv_hulls} as same as the proof of \cite[Lemma 2.4]{Long--Margalit--Pham--Verberne--Yao}. 
We include a proof of Lemma \ref{presv_hulls} to explain why we added such a restriction (Remark \ref{remark_presv_hulls}).

\begin{proof}[Proof of Lemma~\ref{presv_hulls}]
For the forward direction, suppose that $d$ is a vertex of $\mathcal{C}^{\dagger}(N)$ that lies in the hull of $X$. We assume that there exists a vertex $e$ of $\mathcal{C}^{\dagger}(N)$ which intersects $d$ but is disjoint from each curve in $X$. Then, $e$ should be contained in $N\setminus X$. Since $d$ lies in the hull of $X$ and $e$ intersects with $d$, $e$ should be contained in a connected component of $N\setminus X$ which is a disk. Then it follows that $e$ is an inessential curve bounding a disk. It contradicts that $e$ is a vertex of $\mathcal{C}^{\dagger}(N)$.

Suppose for the other direction that $d$ is a vertex of $\mathcal{C}^{\dagger}(N)$ that does not lie in the hull of $X$ (i.e., $d$ is not a vertex of $X$ nor a curve lies in the union of disks bounding the curves in $X$). This means that there is a component of $d\setminus X$ that lies in a component $R$ of $N\setminus X$ that is not a disk. If the genus (as either orientable or nonorientable surface) of $R$ is at least $1$, it is clear that there exists a curve on $R$ which is essential in $N$. If the genus of $R$ is $0$, then each curve in $R$ parallel to a boundary component of $R$ is an essential curve (here we use two-sidedness of the curves corresponding to the vertices of $X$). Particularly, $R$ contains an essential curve in $N$ which intersects $d$, as desired.
\end{proof}

\begin{remark}\label{remark_presv_hulls}
We note that in Lemma~\ref{presv_hulls}, the set $X$ should consist of only vertices represented by two-sided curves in $N$; if we allow one-sided curves, the same proof of Lemma~\ref{presv_hulls} (\cite[Lemma 2.4]{Long--Margalit--Pham--Verberne--Yao}) does not work. More specifically, if $X$ contains a one-sided curve $c$ and the genus of $R$ in the proof of Lemma~\ref{presv_hulls} is $0$, then there is the boundary component $\partial_{c}$ of $N\setminus X$ corresponding to $c$. Thus a curve on $R$ parallel to $\partial_{c}$ bounds a M\"obius band in $N$, which is not essential in $N$.
We also remark that we will use Lemma~\ref{presv_hulls} to the proof of Lemma~\ref{presv_torus_pairs} as $X$ is a torus pair or a pants pair, and so this restriction does not disturb the argument in this paper.
\end{remark}

We will need the following lemma in our proof of Lemma \ref{presv_torus_pairs}.

\begin{lemma}\label{presv_two-sided}
Let $N=N_{g}$ with $g\geq 4$, $\alpha$ an automorphism of $\mathcal{C}^{\dagger}(N)$, and $c$ a one-sided (resp. two-sided) curve. 
Then $\alpha(c)$ is also a one-sided (resp. two-sided) curve.
\end{lemma}

\begin{proof}
    Let $c$ be an one-sided curve in $N$. We prove that $\alpha(c)$ is also one-sided. Let $F$ and $F'$ be surfaces obtained by cutting $N_g$ along $c$ and $\alpha(c)$, respectively. 
    
    If $F$ is orientable, then $F$ is homeomorphic to $S^1_{\frac{g-1}{2}}$. Assume that $\alpha(c)$ is two-sided. Then $F'$ is homeomorphic to $N^2_{g-2}$ since $g$ is odd. Thus we can take a nonseparating multicurve $\{d_1,\cdots, d_{g-2}\}$ in $F'$. This implies that $\{\alpha(c),d_1,\cdots, d_{g-2}\}$ is a nonseparating multicurve in $N_g$. By Lemma~\ref{presv_separating_multi_curves}, 
    $\{c,\alpha^{-1}(d_1),\cdots, \alpha^{-1}(d_{g-2})\}$ is also a nonseparating multicurve in $N_g$, and it implies $F$ admits a non separating multicurve $\{\alpha^{-1}(d_1),\cdots, \alpha^{-1}(d_{g-2})\}$ consisting of $(g-2)$ curves. Hence we obtain $\frac{g-1}{2} \geq g-2$ but this contradicts to the assumption $g \geq 4$. Therefore $\alpha(c)$ is one-sided.

    If $F$ is nonorientable, then $F$ is homeomorphic to $N_{g-1}^1$. 
    Thus we can take a nonseparating multicurve $\{d_1,\cdots, d_{g-1}\}$ in $F$. We can see that $F$ admits a nonseparating multicurve consisting of $(g-1)$ curves by the same argument above.  
    Hence the genus of $F$ is $(g-1)$, and therefore $\alpha(c)$ is one-sided by Lemma~\ref{lem:genus_cutting}.
\end{proof}

From now on, we prove Lemma \ref{presv_torus_pairs}. We note that there are several difference from the proof of \cite[Lemma 2.5]{Long--Margalit--Pham--Verberne--Yao} (see Remark~\ref{obstruction_in_step_1} for the reason why the same argument as that of orientable surfaces does not work).


\begin{lemma}[cf. {\cite[Lemma 2.5]{Long--Margalit--Pham--Verberne--Yao}}]\label{presv_torus_pairs}
Let $N=N_{g}$ with $g\geq 4$. Then every automorphism $\alpha$ of $\mathcal{C}^{\dagger}(N)$ preserves:
\begin{itemize}
\item the set of torus pairs,
\item the set of degenerate torus pairs,
\item the set of nondegenerate torus pairs, and
\item the set of torus triple.
\end{itemize}
\end{lemma}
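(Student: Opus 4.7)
The plan is to follow the three-step strategy of \cite[Lemma 2.5]{Long--Margalit--Pham--Verberne--Yao}, adapted to the nonorientable setting. Torus pairs and pants pairs share the combinatorial property that $c\cap d$ is a single arc, so the first task is to isolate this class among all pairs, and then to separate torus pairs from pants pairs. Because our conventions in Subsection~\ref{notations} force the curves in torus and pants pairs to be two-sided, Lemma~\ref{presv_hulls} is directly applicable to the hulls $c\cup d$ encountered along the way.

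First, in Step~1, I would show that $\alpha$ sends a torus pair to a pair whose intersection is again a single arc. This is the content of Claim~\ref{claim:2to1}: if $\alpha(\{c,d\})$ had two or more intersection arcs, one could exhibit an essential two-sided curve in the complement of the hull that is incompatible with the link characterization of $\{c,d\}$ provided by Lemma~\ref{presv_hulls}, a contradiction. The delicate point flagged in Remark~\ref{obstruction_in_step_1}(1) is that, unlike in the orientable case, a complementary region may be a M\"obius band, so its core curve is one-sided and inessential in $N$; hence the naive argument that every nondisk complementary component supports an essential curve does not apply. One must instead argue, using the genus assumption $g\geq 3$ together with Lemma~\ref{presv_separating_multi_curves}, that a suitable two-sided essential curve can always be located — either in the interior of the complementary region or parallel to a boundary component distinct from that of a M\"obius-band piece.

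Next, in Step~2, I would distinguish torus pairs from pants pairs. This is where Claim~\ref{claim:step2} enters: the hull of a torus pair $\{c,d\}$ contains only $c$, $d$, and (in the nondegenerate case) the third curve $\overline{c\bigtriangleup d}$ as essential two-sided curves, whereas the hull of a pants pair contains additional essential two-sided curves coming from curves parallel to the three boundary components of the pants. By Lemma~\ref{presv_hulls}, this numerical invariant of the hull is preserved by $\alpha$, so torus pairs are mapped to torus pairs. It is crucial here, as pointed out in Remark~\ref{obstruction_in_step_1}(2), that torus pairs be defined to consist only of two-sided curves; otherwise a one-sided analogue would produce hulls in which the above count fails. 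Degenerate versus nondegenerate torus pairs are then distinguished inside the set of torus pairs by whether the third curve $\overline{c\bigtriangleup d}$ lies in the hull, a property read off via Lemma~\ref{presv_hulls} and therefore preserved by $\alpha$.

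Preservation of torus triples is immediate afterward: a torus triple is characterized as a set of three curves every two-element subset of which is a nondegenerate torus pair, and this characterization is preserved by $\alpha$ once nondegenerate torus pairs are. The main obstacle I expect is Claim~\ref{claim:2to1} in Step~1, since reducing the intersection count requires a careful case analysis of the complementary regions in $N$: one must ensure that the essential curves used to derive a contradiction actually exist \emph{and} are two-sided, so that Lemma~\ref{presv_hulls} can be reapplied. It is precisely the M\"obius-band complementary components that obstruct a direct transplant of the orientable argument, and the two-sidedness built into the definitions of torus, pants, and bigon pairs is what allows the argument to close.
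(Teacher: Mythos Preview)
Your Step~2 contains a genuine error. You propose to distinguish torus pairs from pants pairs by counting essential two-sided curves in the hull, asserting that a pants pair hull ``contains additional essential two-sided curves coming from curves parallel to the three boundary components of the pants.'' But the hull of $\{c,d\}$ is exactly $c\cup d$ (there are no disk complementary components when $c\cap d$ is a single arc), and for \emph{both} a torus pair and a pants pair the only simple closed curves in $c\cup d$ are $c$, $d$, and $\overline{c\bigtriangleup d}$. Curves parallel to the boundary of a pants neighborhood are not contained in $c\cup d$. Indeed, Step~1 already establishes that torus pairs and pants pairs satisfy the \emph{same} hull condition (at most one other vertex in the hull), so no hull count can separate them. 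The paper's actual mechanism is entirely different: Claim~\ref{claim:step2} characterizes torus pairs among torus-or-pants pairs by the existence of a separating curve $e$ disjoint from $c\cup d$ such that every nonseparating curve on the side of $e$ containing $\{c,d\}$ meets $c\cup d$ (the neighborhood of a torus pair is a one-holed torus, while for a pants pair the complement of a pants neighborhood in any such region retains positive genus). This criterion is preserved by $\alpha$ via Lemma~\ref{presv_separating_multi_curves}, not via Lemma~\ref{presv_hulls}.

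Your Step~1 sketch is also off target. You speak of exhibiting ``an essential two-sided curve in the complement of the hull,'' but what Claim~\ref{claim:2to1} requires is producing \emph{at least two} essential curves \emph{inside} $c\cup d$ (other than $c,d$) when $c\cap d$ has more than one component. The obstruction flagged in Remark~\ref{obstruction_in_step_1}(1) is that the natural candidates $e_1,e_2\subset c\cup d$ might both bound M\"obius bands and hence be inessential; the paper disposes of this with a $\mathbb{Z}/2\mathbb{Z}$-homology argument ($[c]=[e_1]+[e_2]=0$ forces $c$ separating), not by searching complementary regions or invoking Lemma~\ref{presv_separating_multi_curves}. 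Your description conflates the proof of Lemma~\ref{presv_hulls} (which does analyze complementary regions) with the proof of Claim~\ref{claim:2to1} (which does not). Finally, your characterization of torus triples as triples all of whose pairs are nondegenerate torus pairs is not obviously equivalent to the definition; the paper instead uses that the third curve of a triple is the unique extra vertex in the hull of a nondegenerate torus pair, which is preserved directly by Lemma~\ref{presv_hulls}.
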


\begin{proof}

As in the case of orientable surfaces \cite{Long--Margalit--Pham--Verberne--Yao}, we can prove Lemma~ \ref{presv_torus_pairs} in four steps:

\begin{enumerate}[1.]
    \item $\alpha$ preserves the union of torus pairs and pants pairs consists of only nonseparating curves
    \item $\alpha$ preserves the set of torus pairs
    \item $\alpha$ preserves the degenerate torus pairs
    \item $\alpha$ preserves the set of torus triples
\end{enumerate}

{\it Step 1.}
Let $c$ and $d$ are nonseparating two-sided curves which intersect.
It is enough to show that the following three conditions are equivalent:
\begin{enumerate}[(1)]
\item The pair $\{c,d\}$ is a torus pair or a pants pair.
\item There exists at most one other vertex of $\mathcal{C}^{\dagger}(N)$ lies in the hull of $\{c,d\}$.
\item There exists at most one vertex of $\mathcal{C}^{\dagger}(N)$ whose link contains the link of $\{c,d\}$. 
\end{enumerate}

Step 1 follows from the equivalence of (1)--(3) and Lemma \ref{presv_two-sided}.
 Items (2) and (3) are equivalent by Lemma \ref{presv_hulls}.
 Item (1) implies (2) since if $\{c,d\}$ is a torus pair or a pants pair, then its hull is $c \cup d$.
Now we prove that (2) does not hold if (1) does not (this means that (2) implies (1)).

We modify the argument in \cite{Long--Margalit--Pham--Verberne--Yao}.
Assume that (1) does not hold (i.e., $\{c, d\}$ is neither a torus pair nor a pants pair).
If $\{c, d\}$ is a bigon pair, then the hull of $\{c, d\}$ admits infinitely many vertices, which means that (2) does not hold. 
Thus, it is sufficient to consider the case that $\{c, d\}$ is not a bigon pair. 
Then, $c \cap d$ has two distinct connected components, say $a_1$ and $a_2$. 
They divide $c$ into two curves $c_1$ and $c_2$: $c \setminus (a_1 \sqcup a_2) = c_1 \sqcup c_2$. 
Similarly, they also divide $d$ into $d_1$ and $d_2$
We can take $a_1$ and $a_2$ so that $c \cap d_1 = \emptyset$
(for example, if we consider a moving point on $d$ starting from a component $a_1$, we can take $a_2$ as the component where the point hits $c$ for the first time).
We also take two distinct connected components $b_1$ and $b_2$ of $c \cap d$ with $a_1 \neq b_1$ (it could be $a_1 = b_2$ or $a_2 = b_1$). 
They divide $c$ into $c'_1,c'_2$ and divide $d$ into $d'_1,d'_2$.
We can take $b_1$ and $b_2$ to satisfy $c'_1 \cap d = \emptyset$ and $c'_1 \neq c_1$ (we can choose $c'_1 = c_2$ if $a_1 = b_2$ and $a_2 = b_1$). 
In $c \cup d$, there are four distinct simple closed curves $e_1$, $e_2$, $e_3$, and $e_4$ that contain $c_1 \cup d_1$, $c_2 \cup d_1$, $c'_1 \cup d'_1$, and $c'_2 \cup d'_1$, respectively; they are all distinct from $c$ and $d$. 

If we assume that (2) holds (for contradiction), then the following holds: at least one of pairs $\{e_1,e_2\}$ and $\{e_3,e_4\}$ consists only of inessential curves; let $e_1$ and $e_2$ be inessential without loss of generality. Then, we observe contradictions as follows:
\begin{itemize}
    \item If $e_1$ or $e_2$ bound a disk, then the hull of $\{c,d\}$ contains infinitely many distinct curves; this contradicts the assumption that (2) holds.
    \item If both $e_1$ and $e_2$ bound a M\"obius band, then they represent the same homology class $[e_1]=[e_2]$ of $H_1(N,\mathbb{Z}/2\mathbb{Z})$. Then $[c]=[e_1]+[e_2] = 0$ in $H_1(N,\mathbb{Z}/2\mathbb{Z})$; this means that $c$ is separating, which is contrary to the assumption that $c$ is nonseparating.
\end{itemize} Therefore, we have finished the proof of Step 1.

\medskip

{\it Step 2.}
By Lemma~\ref{presv_separating_multi_curves}, it is sufficient to prove the following.
Let $c,d$ are nonseparating two-sided curves which form a torus pair or a pants pair.
Then, the pair $\{c,d\}$ is a torus pair if and only if there exists a separating curve $e$ such that $e$ is disjoint from both $c$ and $d$ and satisfies the following property: all nonseparating simple closed curves in $N$ lying on the same side of $e$ as $\{c,d\}$ intersect $c\cup d$.     
We can prove it in the same way as the proof of \cite[Step 2 in Lemma 2.5]{Long--Margalit--Pham--Verberne--Yao} as follows.


Let $\{c,d\}$ be a torus pair (then it automatically follows that both $c$ and $d$ are nonseparating). We put $R$ as a neighborhood of $c\cup d$ which is homotopic to a torus with one boundary component $e$ (here, we use the two-sidedness of torus pairs). The resulting surface by cutting $R$ along $c\cup d$ is an annulus. Any nonseparating curve in $R$ is not homotopic to $e$. Hence, any nonseparating curve in $R$ intersects with $c\cup d$. For the other direction, we assume that $c,d$ are nonseparating two-sided curves on $N$ form a pats pair $\{c,d\}$. Let $e$ be any separating curve which is disjoint from $c\cup d$, and let $R$ be the subsurface of $N$ that contains $c\cup d$ and has boundary $e$. Then, $R$ need to have a positive genus, since now $N$ is closed and $c$ and $d$ is not homotopic. There exists a closed neighborhood of $c\cup d$ which is a pair of pants $P$ contained in $R$. We put $P^{\circ}$ as the interior of $P$. Since the genus of $P$ is $0$, there exists a curve on $R\setminus P^{\circ}$ which is nonseparating in $R$ (hence in $N$), and so we are done.    

Step 3 and Step 4 can be proved as in \cite{Long--Margalit--Pham--Verberne--Yao} as follows and thus we obtain Lemma~\ref{presv_torus_pairs}: We can observe that a torus pair $\{c,d\} $ is nondegenerate if and only if there is exactly one other vertex of $\mathcal{C}^{\dagger}(N)$ in the hull of $\{c,d\}$;
Step 3 follows from this observation and Lemma \ref{presv_hulls}. 
Step 4 follows from Step 2, Lemma \ref{presv_hulls} and the fact that if $\{c,d,e\}$ is a torus triple, then $e$ is the exactly one other vertex in the hull of $\{c,d\}$. 
\end{proof}

\begin{remark} \label{obstruction_in_step_1}
We summarize here the issues in considering nonorientable surfaces in the proof of Lemma \ref{presv_torus_pairs}.
\begin{enumerate}
    \item Since we imposed two-sidedness on torus pairs and pants pairs, we needed Lemma~\ref{presv_two-sided} to deduce the conclusion from (1)--(3) in Step 1.
    \item In Step 1 of the proof of Lemma~\ref{presv_torus_pairs}, 
    we argued under the assumption that torus pairs and pants pairs consist of only nonseparating curves, which did not need for the case of orientable surfaces \cite{Long--Margalit--Pham--Verberne--Yao}. 
    Without that assumption, if we attempt to make the same argument as in the orientable case, we have the following issue:
    In \cite{Long--Margalit--Pham--Verberne--Yao}, they used the fact that any inessential curve of orientable surfaces bounds a disk, and so if at least one curve in $\{e_{1},e_{2}, e_{3}, e_{4}\}$ is inessential, then it follows that the hull of $\{c,d\}$ contains infinite essential curves other than $c$ and $d$. However, for nonorientable surfaces, the curves bounding a M\"obius band are also inessential. Then we have the case where the hull of $\{c,d\}$ contains at most one essential curve other than $c$ and $d$ but $\{c,d\}$ does not form a torus pair nor a pants pair if we allow that $c$ or $d$ to be separating. 
    \item The reason why we consider only two-sided curves for torus pairs/triples is as follows: if both $c$ and $d$ are one-sided curves in Step 2, 
    the curve $e$ may not be separating by taking a boundary curve of $R$.
\end{enumerate}

\end{remark}

We explain the definition of an annulus set from now on. Suppose that $(a,b)$ is an ordered pair of vertices in $\mathcal{C}^{\dagger}(N)$ that are disjoint, homotopic two-sided curves. Assuming $g\geq 3$, there is a unique annulus $A$ in $N_{g}$ whose boundary is $a\cup b$. Let $\mathcal{C}^{\dagger}(a,b)$ be the set of vertices of $\mathcal{C}^{\dagger}(N_{g})$ consisting of curves contained in the interior of $A$. We refer to $\mathcal{C}^{\dagger}(a,b)$ as an {\it annulus set}. 
An {\it annulus pair} is an element of an annulus set. 
A {\it nonseparating noncrossing annulus pair} is an annulus pair where both curves are nonseparating and the pair is noncrossing. There is a natural partial ordering on the annulus set $\mathcal{C}^{\dagger}(a,b)$: we say that $c\preceq d$ if $c$ and $d$ are noncrossing and each component of $c\setminus d$ lies in the component of $A\setminus d$ bounded by $a$.

\begin{lemma}[cf. {\cite[Lemma 2.6]{Long--Margalit--Pham--Verberne--Yao}}]\label{presv_annulus_sets}
Let $N=N_{g}$ with $g\geq 4$, let $\alpha$ be an automorphism of $\mathcal{C}^{\dagger}(N)$, and let $a$ and $b$ be disjoint, homotopic nonseparating two-sided curves. Then,
\begin{enumerate}[\textup{(}$1$\textup{)}]
    \item the curves $\alpha(a)$ and $\alpha(b)$ are disjoint, homotopic nonseparating two-sided curves,
    \item the image of $\mathcal{C}^{\dagger}(a,b)$ under $\alpha$ is $\mathcal{C}^{\dagger}(\alpha(a), \alpha(b))$,
    \item if $c,d\in\mathcal{C}^{\dagger}(a,b)$ are noncrossing then $\alpha(c)$ and $\alpha(d)$ are noncrossing, and
    \item if $c\preceq d$ in $\mathcal{C}^{\dagger}(a,b)$ then $\alpha(c)\preceq\alpha(d)$ in $\mathcal{C}^{\dagger}(\alpha(a), \alpha(b))$.
\end{enumerate}
\end{lemma}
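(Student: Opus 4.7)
The plan is to adapt the proof of \cite[Lemma 2.6]{Long--Margalit--Pham--Verberne--Yao} to the nonorientable setting. For each of the four statements I would give a graph-theoretic characterization that is manifestly preserved by $\alpha$, invoking only the earlier preservation results: Lemma~\ref{presv_separating_multi_curves} (separating multicurves and their sides), Lemma~\ref{presv_hulls} (hulls), and Lemma~\ref{presv_torus_pairs} (torus pairs, together with the preservation of nonseparating pants pairs proven in Step~1 of its proof).

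For~(1), I propose the following characterization: $\{a,b\}$ is a disjoint, homotopic, nonseparating, two-sided pair if and only if $a$ and $b$ are nonseparating, $\{a,b\}$ is a separating multicurve, and exactly one complementary component of $N \setminus (a \cup b)$ contains no nonseparating pants pair. The annular complementary component, when present, satisfies this last condition because every essential simple closed curve in an annulus is parallel to the core; the other complementary component has two boundary components and Euler characteristic $2-g \leq -1$ for $g \geq 3$, which is enough room to admit a nonseparating pants pair. Two-sidedness of $a$ and $b$ is automatic, since any annular complementary component forces its two boundary curves to be two-sided. Applying Lemmas~\ref{presv_separating_multi_curves} and~\ref{presv_torus_pairs} to all three clauses gives~(1). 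For~(2), the annular side of $\{a,b\}$ is identified by the same ``no nonseparating pants pair'' criterion, so Lemma~\ref{presv_separating_multi_curves} yields $\alpha(\mathcal{C}^\dagger(a,b)) \subseteq \mathcal{C}^\dagger(\alpha(a), \alpha(b))$, and the reverse inclusion follows from the same argument applied to $\alpha^{-1}$.

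For~(3) and~(4), I would characterize the partial order $\preceq$ and the noncrossing condition using sides of further separating multicurves inside the annulus. Observe that, by~(1), for any $d \in \mathcal{C}^\dagger(a,b)$ the pair $\{d, b\}$ is itself a disjoint, homotopic, nonseparating, two-sided pair cobounding a subannulus; hence $c \preceq d$ is characterized by ``$c$ lies on the same side of the separating multicurve $\{d, b\}$ as $a$'', and noncrossing is characterized by comparability $c \preceq d$ or $d \preceq c$ in this partial order. Both conditions are then preserved by Lemma~\ref{presv_separating_multi_curves}. The main obstacle is formulating the noncrossing condition in a genuinely graph-theoretic way that avoids circularity with the definition of $\preceq$, especially when $c$ and $d$ share multiple intersection arcs; a secondary concern is verifying for the borderline case $g=3$ that the non-annular complement (a M\"obius band with one extra hole) does admit a nonseparating pants pair, as this is needed for the characterization in~(1) to cleanly distinguish the two complementary components.
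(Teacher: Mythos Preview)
The paper omits the proof of this lemma, deferring to \cite{Long--Margalit--Pham--Verberne--Yao}, so there is nothing here to compare against directly; your overall strategy is the expected one. However, your ``secondary concern'' is a genuine obstruction, not merely something to verify: at $g=3$ the non-annular complement of $a\cup b$ is $N_1^2$, and it contains \emph{no} nonseparating pants pair. Every two-sided curve in $N_1^2$ has $\mathbb{Z}/2$-homology class $0$ or $[a]$ in $H_1(N_3;\mathbb{Z}/2)$; those of class $0$ are separating in $N_3$ and hence inessential (there are no essential separating curves in $N_3$), while for any two-sided $c$ of class $[a]$ disjoint from $a$ the multicurve $\{a,c\}$ separates $N_3$ into two pieces with two boundary circles each and Euler characteristics summing to $-1$, forcing one piece to be an annulus and $c$ to be isotopic to $a$. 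Thus every two-sided essential curve on the non-annular side is homotopic to $a$, no pants pair exists there, and your criterion in~(1) cannot distinguish the two sides. (Replacing pants pairs by torus pairs does not help: an Euler-characteristic count shows no once-holed torus embeds in $N_1^2$.)

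A fix that works uniformly for $g\geq 3$ is to characterize the annular side as the component in which every curve $c$ makes $\{a,c\}$ a separating multicurve. The non-annular side always has positive genus, hence contains a curve $c$ nonseparating in that side, and then $\{a,c\}$ is nonseparating in $N$; this uses only Lemma~\ref{presv_separating_multi_curves}. Your ``main obstacle'' for~(3) and~(4) is also a real gap as stated: the side-of-$\{d,b\}$ description of $\preceq$ only covers the case $c\cap d=\emptyset$, so you still owe an argument for noncrossing pairs that share arcs.
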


Note that if two curves $a$ and $b$ on $N$ are disjoint and homotopic, then it automatically follows that $a$ and $b$ are two-sided since any pair of homotopic one-sided curves must intersect.
We give a proof of Lemma~\ref{presv_annulus_sets}~(1) since we need a modification from the orientable case \cite[Lemma 2.6]{Long--Margalit--Pham--Verberne--Yao}.

\begin{proof}[Proof of Lemma~\ref{presv_annulus_sets}~(1)]
For two disjoint nonseparating two-sided curves $a,b$ such that $\{a,b\}$ is a separating multicurve, we can observe, by Lemma~\ref{lem:cutting_separate}, that the following are equivalent:
\begin{itemize}
    \item at least one component of the surface obtained by cutting $N_g$ along $a$ and $b$ is homeomorphic $S_0^2$ or $N_1^2$,
    \item all separating curves disjoint from $a$ and $b$ lie on the same side of $\{a,b\}$ (if they exist).
\end{itemize}
Let $F$ and $F'$ (resp. $\tilde{F}$ and $\tilde{F}'$) denote the connected components of the surface obtained from $N_g$ by cutting along $a$ and $b$ (resp. $\alpha(a)$ and $\alpha(b)$).
Since $a$ and $b$ are homotopic, we can assume that $F \cong S_0^2$ and $F' \cong N_{g-2}^2$.
By Lemma \ref{presv_separating_multi_curves} and the above observation, $\tilde{F} \sqcup \tilde{F}'$ is homeomorphic to $  S_0^2 \sqcup N_{g-2}^2$ or $N_1^2\sqcup N_{g-3}^2$. Let us assume that the latter holds (to derive a contradiction). We assume that $\tilde{F} \cong N_1^2$. Take an one-sided curve $c$ lies in $\tilde{F}$. Since $\alpha^{-1}(c)$ is one-sided by Lemma~\ref{presv_two-sided}, $\alpha^{-1}(c)$ must lie in $F'$. If we take a separating essential curve $d$ that lies in $F'$ (it exists since $g \geq 4$), then $\alpha(d)$ is separating and lies in $\tilde{F}$ by Lemma \ref{presv_separating_multi_curves}.
This contradicts Lemma~\ref{lem:cutting_separate} and hence (1) follows. 
\end{proof}

Item (2) follows from (1) and Lemma \ref{presv_separating_multi_curves}.
In (3) and (4) we discuss in an annular domain, and it does not depend on the orientability of the surface. Therefore, they are shown by exactly the same arguments as in \cite{Long--Margalit--Pham--Verberne--Yao}, so we omit their proofs.

    

Now we define type 1 and type 2 curves. Suppose that $\{c,d\}$ is a nonseparating noncrossing annulus pair, and suppose that $e$ is a curve so that $\{c,e\}$ and $\{d, e\}$ are degenerate torus pairs. If $c\cup e$ and $d\cap e$ are the same point, then we say that $e$ is a {\it type 1 curve} for $\{c,d\}$. Otherwise, we say that $e$ is a {\it type 2 curve} for $\{c,d\}$.

\begin{lemma}[cf. {\cite[Lemma 2.7]{Long--Margalit--Pham--Verberne--Yao}}]\label{presv_type12}
Let $N=N_{g}$ with $g\geq 3$. Then every automorphism $\alpha$ of $\mathcal{C}^{\dagger}(N)$ preserves type 1 and type 2 curves for nonseparating noncrossing annulus pairs. More precisely, if $\{c,d\}$ is a nonseparating noncrossing annulus pair and $e$ is a type 1 curve  for $\{c,d\}$, then $\alpha(e)$ is a type 1 curve for the nonseparating noncrossing annulus pair $\{\alpha(c),\alpha(d)\}$, and similar for type 2 curves.
\end{lemma}

The proof of Lemma \ref{presv_type12} is again discussed in an annular domain and does not depend on the orientability of the surface, so we omit the proof.


At the end of the section, we discuss how Propositions~\ref{presv_bigon_pairs} and \ref{presv_linked_sharing_pairs} can be derived from the lemmas in this section.
For Proposition~\ref{presv_bigon_pairs}, each nonseparating bigon pair is a nonseparating noncrossing annulus pair $\{c,d\}$ that forms exactly one inessential curve bounding a disk and has the additional property that $c\cap d$ is a nondegenerate interval. Hence, by Lemmas~\ref{presv_annulus_sets} and \ref{presv_type12}, we can prove Proposition~\ref{presv_bigon_pairs} by the same way as the case of orientable surfaces. 
Proposition~\ref{presv_linked_sharing_pairs} follows from Proposition ~\ref{presv_bigon_pairs}, Lemma~\ref{presv_torus_pairs}, and the following claim: Two bigon pairs $\{c,d\}$ and $\{c',d'\}$ form a linked sharing pair if and only if the following conditions hold:
\begin{enumerate}
    \item each $\{c,d'\}$ and $\{c',d\}$ is an nondegenerate torus pair, and
    \item there is a curve that forms a torus triple with both $\{c,d'\}$ and $\{c',d\}$.
\end{enumerate}
The proof of claim is the same as the case of orientable surfaces. 

\section{Connectedness of fine arc graphs}

In \cite{Long--Margalit--Pham--Verberne--Yao}, they introduced several fine arc graphs of orientable surfaces and proved that they are connected. 
In this section, we consider fine arc graphs of nonorientable surfaces and discuss their connectedness. 
Through of this section, let $F$ be a compact surface with nonempty boundary $\partial F \neq \emptyset$.

We will define the \emph{fine arc graph} $\fa (F)$ of $F$. An arc $a \colon [0,1] \to F$ is said to be \emph{simple} if $a$ is injective,
\emph{proper} if $a^{-1}(\partial F)=\{0,1\}$, and \emph{essential} if it is not homotopic to into $\partial F$. 
We say that two arcs have \emph{disjoint interior} if they are disjoint away from $\partial F$.
The fine arc graph $\fa (F)$ is the graph whose vertices are essential simple proper arcs in $F$ and whose edges connect vertices with disjoint interior. 

In \cite[Proposition 3.1]{Long--Margalit--Pham--Verberne--Yao}, 
they proved that the fine arc graph $\fa(S)$ is connected for every compact orientable surface $S$ with nonempty boundary. Their argument is based on the connectedness of the (ordinary) arc graph $\mathcal{A}(S)$ and the compactness of $S$. These properties also hold for nonorientable surfaces; for every $N=N_g^b$ with $b>0$, the arc graph $\mathcal{A}(N)$ is connected (see \cite[Corollary 1]{Kuno16} for example) and $N$ is compact. Thus, the following proposition holds.

\begin{proposition}[cf. {\cite[Proposition 3.1]{Long--Margalit--Pham--Verberne--Yao}}]
    For any $N=N_g^b$ with $b>0$, the graph $\fa(N)$ is connected.
\end{proposition}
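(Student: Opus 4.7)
The plan is to follow the orientable argument of Long--Margalit--Pham--Verberne--Yao \cite[Proposition 3.1]{Long--Margalit--Pham--Verberne--Yao} line for line; the two ingredients it needs---connectedness of the ordinary arc graph $\mathcal{A}(N)$ and compactness of $N$---are already in hand for $N = N_g^b$ with $b > 0$. Given vertices $a, b \in \fa(N)$, the aim is to build a finite walk from $a$ to $b$ in the fine arc graph by first producing a chain of isotopy classes in $\mathcal{A}(N)$ and then refining each step of that chain into an edge-path in $\fa(N)$.

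For the first step, connectedness of $\mathcal{A}(N)$ gives a chain of isotopy classes $[a] = [\alpha_0], [\alpha_1], \dots, [\alpha_n] = [b]$ in which consecutive classes admit disjoint representatives. Since any isotopy between two proper arcs in $N$ extends to an ambient isotopy of $N$, one may inductively choose actual representatives $d_i \in [\alpha_i]$ with $d_0 = a$ and $d_i$ disjoint in its interior from $d_{i-1}$ for $1 \le i \le n - 1$, and similarly select $\tilde d_{n-1} \in [\alpha_{n-1}]$ disjoint from $b$. The entire proposition then reduces to one remaining link: connecting the two isotopic arcs $d_{n-1}$ and $\tilde d_{n-1}$ inside $\fa(N)$ (and, in the degenerate case $n = 0$, connecting $a$ to $b$ directly inside their common class).

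This reduces matters to the key lemma that any two isotopic essential simple proper arcs $a, a' \in \fa(N)$ are connected in $\fa(N)$. Here compactness enters: fix an ambient isotopy $\Phi_t$ of $N$ with $\Phi_0 = \mathrm{id}$ and $\Phi_1(a) = a'$, set $a_t = \Phi_t(a)$, and for each $t \in [0, 1]$ pick a closed regular neighborhood $U_t$ of the proper arc $a_t$ together with an essential proper arc $c_t$ of $N$ contained in $N \setminus U_t$. Continuity of $\Phi$ gives an open interval $I_t \ni t$ on which $c_t$ remains disjoint from $a_s$, compactness of $[0, 1]$ extracts a finite subcover $I_{t_1}, \dots, I_{t_m}$, and choosing sample times $0 = s_0 < s_1 < \dots < s_m = 1$ with each consecutive pair lying in a common $I_{t_k}$ yields a walk $a = a_{s_0}, c_{t_1}, a_{s_1}, c_{t_2}, \dots, c_{t_m}, a_{s_m} = a'$ in $\fa(N)$.

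The main point requiring a quick check is the existence of the transversal arc $c_t$ for every $t$; this is easily supplied by taking $c_t$ to be a sufficiently small push-off of $a_t$ with endpoints sliding along $\partial N$, which is a proper arc isotopic (hence essential) and disjoint in its interior from $a_t$. All remaining ingredients---extension of arc isotopies to ambient isotopies, regular neighborhoods of properly embedded arcs, and finite covers of a compact interval---are purely topological and insensitive to orientability, so the orientable argument transfers to $N$ with no essential modification once Kuno's connectedness theorem for $\mathcal{A}(N)$ is invoked.
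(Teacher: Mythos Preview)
Your proposal is correct and matches the paper's approach exactly: the paper does not write out a proof but simply observes that the argument of \cite[Proposition~3.1]{Long--Margalit--Pham--Verberne--Yao} rests only on the connectedness of the ordinary arc graph $\mathcal{A}(N)$ (supplied by Kuno's result) and the compactness of $N$, both of which hold in the nonorientable setting. You have faithfully spelled out that argument---the reduction via a chain of isotopy classes in $\mathcal{A}(N)$ to the case of two isotopic arcs, and then the compactness-of-$[0,1]$ trick with push-off transversals---so nothing further is needed.
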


We say that an arc in $F$ is \emph{nonseparating} if its complement in $F$ is connected.
The \emph{fine nonseparating arc graph} $\fna (F)$ is the subgraph of $\fa(F)$ spanned by the nonseparating arcs.

In \cite{Long--Margalit--Pham--Verberne--Yao}, they proved the connectedness of $\fna(S)$ by using the connectedness of $\fa(S)$. 
In their argument, the following claim is a key: if $x$ is a separating arc in $S$ and $R \subset S$ is one
side of $x$, then there is a nonseparating arc $y$ in $S$ that is contained in $R$.
Since this also holds for nonorientable surfaces, we can make the same argument as they do, and thus obtain the following.

\begin{corollary}[cf. {\cite[Cororally 3.2]{Long--Margalit--Pham--Verberne--Yao}}]
    For any $N=N_g^b$ with $b>0$, the graph $\fna(N)$ is connected.
\end{corollary}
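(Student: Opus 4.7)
The plan is to deduce the corollary from the connectedness of $\fa(N)$ just established, by a direct adaptation of the argument in \cite{Long--Margalit--Pham--Verberne--Yao}. Given vertices $\alpha,\beta$ of $\fna(N)$, I would first use the preceding proposition to pick a path $\alpha = \alpha_0, \alpha_1, \ldots, \alpha_n = \beta$ in $\fa(N)$. The goal is then to modify this path so that every interior vertex is nonseparating, at which point the modified path lives in $\fna(N)$. I would proceed by induction on the number of interior separating vertices, at each step replacing (or bypassing) one separating arc by one or two nonseparating arcs with the correct disjointness properties.

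The key topological input, which transfers verbatim from the orientable to the nonorientable setting, is that an essential separating arc $c\subset N$ cuts $N$ into two compact connected subsurfaces $N_1,N_2$, each carrying $c$ in its boundary together with a nonempty portion of $\partial N$; neither piece is a disk, since $c$ is essential. Consequently, each $N_j$ admits essential proper arcs that are nonseparating in $N_j$, and any such arc pushed into the interior $N_j^\circ$ is automatically nonseparating in $N$ once $N_1$ and $N_2$ are reglued along $c$.

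For the replacement step, I note that because $\alpha_{i-1}$ and $\alpha_{i+1}$ have interiors disjoint from the separating arc $\alpha_i$, each of them lies in the closure of exactly one side of $\alpha_i$. If both lie on the same side, say in $\overline{N_1}$, then any essential nonseparating arc in $N_2^\circ$ serves as a valid replacement vertex, adjacent in $\fna(N)$ to both $\alpha_{i-1}$ and $\alpha_{i+1}$. If they lie on opposite sides, I would splice in a two-arc detour consisting of a nonseparating arc in $\overline{N_1}$ disjoint from $\alpha_{i-1}$ followed by a nonseparating arc in $\overline{N_2}$ disjoint from $\alpha_{i+1}$; the interiors of these two detour arcs are automatically disjoint because they lie in disjoint open subsurfaces.

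The main obstacle I anticipate is the opposite-sides case, where one must confirm that each piece $N_j$ contains enough topology to host a nonseparating-in-$N$ arc avoiding a prescribed arc on the same side. This reduces to a short case analysis of the minimal-complexity base cases (when $N_j$ is an annulus or a Möbius band with one additional hole), handled by direct inspection. Nonorientability of $N$ ensures that at least one side remains nonorientable whenever the other is planar, so parity considerations around one-sided versus two-sided arcs do not obstruct the construction, and the induction closes to give connectedness of $\fna(N)$.
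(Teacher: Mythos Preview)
Your proposal is correct and follows exactly the approach the paper invokes (it simply cites the argument of Long--Margalit--Pham--Verberne--Yao, noting that the only topological input needed---a separating arc cuts the surface into two subsurfaces---holds for nonorientable surfaces). One small simplification: in the opposite-sides case you have made the detour harder than necessary by placing $\gamma_1$ on the \emph{same} side as $\alpha_{i-1}$. If instead you take $\gamma_1$ to be any nonseparating-in-$N$ arc in $N_2$ and $\gamma_2$ any such arc in $N_1$, then $\alpha_{i-1},\gamma_1,\gamma_2,\alpha_{i+1}$ is already a valid path (each consecutive pair lies in opposite pieces, hence has disjoint interiors), and your ``main obstacle'' disappears with no case analysis required.
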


For a surface $F$ with positive genus and a connected component $d_0$ of $\partial F$, we will define the \emph{fine linked arc graph} $\fla(F,d_0)$. 
Let $a$ and $b$ be two vertices of $\fa(F)$ whose interiors are disjoint arcs. 
We say that $a$ and $b$ are linked at $d_0$ if all four endpoints lie on $d_0$ and a curve parallel and sufficiently close to $d_0$ intersects the arcs alternately.   
We define $\fla(F,d_0)$ as the graph whose vertices are nonseparating simple proper arcs in $F$ with both endpoints lie on $d_0$ and whose edges connect arcs with disjoint interiors that are linked at $d_0$.

In \cite{Long--Margalit--Pham--Verberne--Yao}, they obtained the connectedness of $\fla(S,d_0)$ from the connectedness of $\fna(S)$. 
The key to their argument is the following claim: if $x$ and $y$ are unlinked, then there exists an arc $z$ which is linked with $x$ and $y$. 
This claim is based on the assumption that $x$ and $y$ are nonseparating, and the same argument goes through for the nonorientable case. 
Hence, we obtain the following.

\begin{corollary}[cf. {\cite[Cororally 3.3]{Long--Margalit--Pham--Verberne--Yao}}] \label{cor:fine_link_arc}
    For any $N=N_g^b$ with $g \geq 1$ and $b>0$, and any component $d_0$ of $\partial N$, the graph $\fla(N,d_0)$ is connected.
\end{corollary}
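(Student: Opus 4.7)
The plan is to mirror the argument of \cite[Corollary 3.3]{Long--Margalit--Pham--Verberne--Yao}, reducing to two ingredients: the connectedness of $\fna(N)$ (the preceding corollary) and the key claim that any two unlinked nonseparating arcs admit a third arc linked with both. Because both ingredients hold equally well in the nonorientable setting, the proof should transfer with essentially no change.

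Concretely, I would proceed as follows. Given two vertices $a, b \in \fla(N, d_0)$, first invoke the preceding corollary to obtain a path $a = c_0, c_1, \ldots, c_n = b$ in $\fna(N)$, that is, a sequence of nonseparating proper arcs in which consecutive terms have disjoint interiors but whose endpoints need not lie on $d_0$. For each intermediate $c_i$, I would then produce a replacement $c_i' \in \fla(N, d_0)$ whose interior is disjoint from the previously chosen $c_{i-1}'$; this is possible because $N$ has positive nonorientable genus and $d_0$ is a full boundary component, leaving enough room to slide endpoints onto $d_0$ without destroying the nonseparating property. This reduces the task to connecting two vertices of $\fla(N, d_0)$ whose interiors are already disjoint.

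For such a pair $a, b$: if they are linked at $d_0$, they are already adjacent in $\fla(N, d_0)$; if they are unlinked, the key claim supplies $z \in \fla(N, d_0)$ whose interior is disjoint from $a \cup b$ and which is linked at $d_0$ with each of $a$ and $b$, furnishing the path $a \to z \to b$. The key claim itself is proved by cutting $N$ along $a \cup b$ and using the nonseparating hypothesis on $a, b$ to construct a linking arc in the resulting (connected) piece; this is a purely topological argument that is insensitive to orientability.

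The main delicate point throughout is ensuring that the rerouting of endpoints and the construction of $z$ never produce a separating arc when they pass near a crosscap. I would handle this by performing each modification inside a regular neighborhood in which the $\mathbb{Z}/2\mathbb{Z}$-homology class of the original nonseparating arc is preserved, which is always possible given $g \geq 1$. Beyond this vigilance, no new input is required, as the excerpt's remark that the key claim ``goes through for the nonorientable case'' is precisely the assertion that these local modifications can be carried out without appealing to orientability.
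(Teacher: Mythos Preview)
Your proposal follows essentially the same approach as the paper: reduce to the connectedness of $\fna(N)$ and then invoke the key interpolation claim (unlinked $x,y$ admit a common linked $z$), noting that this claim depends only on the nonseparating hypothesis and thus transfers to the nonorientable setting. The paper gives no further detail than this, so your outline is at least as complete as the paper's own treatment; your additional remarks about sliding endpoints onto $d_0$ and guarding $\mathbb{Z}/2\mathbb{Z}$-homology near crosscaps are reasonable elaborations that the paper simply leaves implicit by deferring to \cite{Long--Margalit--Pham--Verberne--Yao}.
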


\section{Automorphisms of the extended fine curve graphs}\label{section_aut_extended_fine_curve_graph}

The goal of this section is to 
provide an outline of the proof of the following:

\begin{theorem}[cf. {\cite[Theorem 1.2]{Long--Margalit--Pham--Verberne--Yao}}]\label{thm:extended_graph}
Let $N$ be a closed nonorientable surface. Then, the natural map $\nu\colon\mathrm{Homeo}(N)\rightarrow {\rm Aut}( \mathcal{EC}^{\dagger}(N) )$ is an isomorphism.
\end{theorem}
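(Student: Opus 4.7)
The plan is to adapt the strategy of Long--Margalit--Pham--Verberne--Yao for the orientable analogue, exploiting Propositions~\ref{presv_bigon_pairs} and~\ref{presv_linked_sharing_pairs}. Injectivity of $\nu$ is straightforward: if $h \in \mathrm{Homeo}(N_g)$ fixes every vertex of $\mathcal{EC}^{\dagger}(N_g)$ setwise, then $h$ fixes each embedded closed disk in $N_g$ (its boundary is a disk-bounding vertex of $\mathcal{EC}^{\dagger}(N_g)$); since every point $p \in N_g$ is the unique intersection of a nested shrinking sequence of such disks, we have $h(p) = p$ for all $p$, so $h = \mathrm{id}$.

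For surjectivity, given $\alpha \in \mathrm{Aut}(\mathcal{EC}^{\dagger}(N_g))$, the goal is to construct $h \in \mathrm{Homeo}(N_g)$ with $\nu(h) = \alpha$. The key observation from LMPVY is that points of $N_g$ can be recovered combinatorially: to a linked sharing pair $\{\{c_1,c_2\},\{c_3,c_4\}\}$ with common disk-bounding curve $e$, associate the unique intersection point inside the disk bounded by $e$ of the two bigon arcs $c_1 \cap c_2$ and $c_3 \cap c_4$ (linkedness at $e$ forces these arcs to cross in the disk). By Proposition~\ref{presv_linked_sharing_pairs}, $\alpha$ permutes linked sharing pairs and hence induces a map on the set of such ``sharing points''; every point of $N_g$ arises this way, since one can realize arbitrarily small disks bounded by $e$ around any prescribed point. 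This defines a candidate map $h\colon N_g \to N_g$, and one then verifies: (i) $h$ is well-defined (different linked sharing pairs with the same sharing point map to pairs with a common sharing point, because their combinatorial equivalence is captured by relations in $\mathcal{EC}^{\dagger}(N_g)$); (ii) $h$ is bijective via the analogous construction for $\alpha^{-1}$; (iii) $h$ is continuous because $\alpha$ sends small enclosing disks to small enclosing disks; and (iv) the action of $h$ on essential curves coincides with $\alpha$, via preservation of bigon pairs (Proposition~\ref{presv_bigon_pairs}).

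The main obstacle is carrying out (i)--(iv), which requires that the combinatorial preservation results of Section~\ref{preliminaries} are rigid enough to pin down a homeomorphism. LMPVY's orientable argument is local in nature: near any embedded disk, linked sharing pair combinatorics faithfully encode the Euclidean structure of the disk. Since every point of $N_g$ has a Euclidean disk neighborhood and $\mathcal{EC}^{\dagger}(N_g)$ contains all disk-bounding curves, the same local arguments apply. The exclusion of M\"obius-bounding curves from $\mathcal{EC}^{\dagger}(N_g)$, as emphasized in the introduction and Remark~\ref{remark_for_extended_graph}, is essential here: admitting such curves would destroy the ``unique bounded disk'' of an inessential vertex and invalidate the sharing-point construction. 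With this caveat, the LMPVY argument transports to $N_g$ with only cosmetic modifications.
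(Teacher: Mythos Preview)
Your injectivity argument is fine and matches the paper's in spirit. The surjectivity argument, however, has a genuine gap and also inverts the paper's logical architecture.

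The gap: Propositions~\ref{presv_bigon_pairs} and~\ref{presv_linked_sharing_pairs} are stated and proved for automorphisms of $\mathcal{C}^{\dagger}(N)$, not of $\mathcal{EC}^{\dagger}(N)$. You apply them to $\alpha\in\mathrm{Aut}(\mathcal{EC}^{\dagger}(N_g))$ without first showing that such an $\alpha$ preserves the subgraph $\mathcal{C}^{\dagger}(N_g)$, i.e., that $\alpha$ sends essential curves to essential curves and disk-bounding curves to disk-bounding curves. This is not automatic, and you have not supplied a graph-theoretic characterization of disk-bounding vertices inside $\mathcal{EC}^{\dagger}(N_g)$ that $\alpha$ would have to respect. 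Without that, the linked-sharing-pair construction of $h$ cannot get off the ground. Your continuity step (iii), ``$\alpha$ sends small enclosing disks to small enclosing disks'', suffers from the same missing ingredient.

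The paper's route is entirely different and avoids Section~\ref{preliminaries} altogether. Following \cite{Long--Margalit--Pham--Verberne--Yao}, it introduces \emph{convergent sequences} of vertices of $\mathcal{EC}^{\dagger}(N)$ (sequences eventually consisting of disk-bounding curves shrinking to a point), shows via Lemmas~\ref{connects-the-dots_lemma}--\ref{presv_conv_seq} and Corollaries~\ref{coincidence_conv_seq}--\ref{presv_limit_curve} that convergence, coincidence of limits, and limit curves are all encoded purely in the graph, and then defines $f_\alpha(x)=\lim(\alpha(c_i))$ for any sequence $(c_i)$ with $\lim(c_i)=x$. Well-definedness, bijectivity, and continuity all follow from these lemmas; no bigon pairs or sharing pairs are needed. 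In the paper's overall scheme, the machinery you invoke is used the other way around: Propositions~\ref{presv_bigon_pairs} and~\ref{presv_linked_sharing_pairs} serve to build the extension map $\varepsilon\colon\mathrm{Aut}(\mathcal{C}^{\dagger}(N))\to\mathrm{Aut}(\mathcal{EC}^{\dagger}(N))$ in the proof of Theorem~\ref{thm:main}, \emph{after} Theorem~\ref{thm:extended_graph} has already been established by the convergent-sequence argument.
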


We define the {\it extended fine curve graph} $\mathcal{EC}^{\dagger}(N)$ of a closed nonorientable surface $N$ to be a graph whose vertices are all the essential simple closed curves on $N$ or the inessential simple closed curves which bound a disk on $N$, and an edge is a pair of vertices which are disjoint in $N$.
We emphasize that in our argument the vertex set of the extended fine curve graph $\mathcal{EC}^{\dagger}(N)$ of any closed nonorientable surface $N$ does not contain any inessential curves bounding a M\"obius band.

Following the method of \cite{Long--Margalit--Pham--Verberne--Yao}, we use convergent sequences described below to prove Theorem~\ref{thm:extended_graph}. We see that the lemmas and corollaries correspond to \cite[Section 4]{Long--Margalit--Pham--Verberne--Yao} also hold for nonorientable surfaces. 
Since the proofs of the lemmas and corollaries in this section are the same as those of 
\cite{Long--Margalit--Pham--Verberne--Yao} for orientable surfaces, 
we will only describe why they are valid for nonorientable surfaces and omit their proofs.


Let $N$ be a closed nonorientable surface of genus $g$. We say that a sequence of vertices $(c_{i})$ of $\mathcal{EC}^{\dagger}(N)$ {\it converges} to a point $x\in N$ if every neighborhood of $x$ contains all but finitely many of the corresponding curves to $c_{i}$, and we write $\mathrm{lim}(c_{i})=x$. If $(c_{i})$ is convergent, it must be that there exists $M>0$ so that each $c_{i}$ with $i>M$ bounds a disk in $N$.

Since~\cite[Lemma 4.1]{Long--Margalit--Pham--Verberne--Yao} is 
proved by a local argument in a surface, 
the same argument works also for nonorientable surfaces. 

\begin{lemma}[cf. {\cite[Lemma 4.1]{Long--Margalit--Pham--Verberne--Yao}}]\label{connects-the-dots_lemma}
Let $N$ be a closed nonorientable surface, and let $(x_{i})$ be a sequence of points in $N$ that converges to a point $x\in N$. Then, there exists a simple closed curve in $N$ that contains infinitely many of the $x_{i}$. 
\end{lemma}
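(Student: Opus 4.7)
The statement is essentially local at $x$, so the nonorientability of $N$ plays no role, and my plan mirrors the orientable argument. The idea is to extract a subsequence of $(x_i)$ with good geometric properties inside a disk around $x$, build an embedded arc through that subsequence with one endpoint at $x$, and then close it up in $N$ to obtain a simple closed curve.

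First, after passing to a subsequence and discarding finitely many terms, I may assume the $x_i$ are pairwise distinct, distinct from $x$, and lie in the interior of a fixed closed disk neighborhood $D$ of $x$. Identify $D$ with the closed unit disk in $\mathbb{R}^2$ with $x$ at the origin, and write $x_i = (r_i, \theta_i)$ in polar coordinates. Passing to a further subsequence, I arrange that $(r_i)$ is strictly decreasing to zero and that $(\theta_i)$ converges to some $\theta_\infty \in S^1$.

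Next, in each nested annulus $A_i = \{(\rho,\theta) : r_{i+1} \leq \rho \leq r_i\}$, I choose a simple arc $\sigma_i \subset A_i$ from $x_i$ (on the outer circle) to $x_{i+1}$ (on the inner circle), constrained to lie in a thin angular sector around $\theta_\infty$. Since consecutive annuli share only a boundary circle that meets $\sigma_i$ and $\sigma_{i+1}$ exactly at their common endpoint $x_{i+1}$, the union $\bigcup_i \sigma_i$ is embedded; adjoining the limit point $x$ yields an embedded arc $\bar\alpha$ from $x_1$ to $x$ containing every $x_i$. To upgrade this to a simple closed curve, I pick an embedded arc $\beta$ from $x$ to $x_1$ whose interior lies in $\mathrm{int}(D) \setminus \bar\alpha$, leaving $x$ in a direction roughly opposite to $\theta_\infty$; such a $\beta$ exists because $\bar\alpha$ is confined to a thin spiral and $\mathrm{int}(D) \setminus \bar\alpha$ is connected (a compact arc does not separate an open disk). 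Then $\bar\alpha \cup \beta$ is the desired simple closed curve.

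The main subtlety is assembling the sub-arcs $\sigma_i$ into a genuinely embedded arc with a continuous parameterization at the limit endpoint $x$. This is handled by placing the $\sigma_i$ in disjoint annuli and confining them to sectors shrinking toward $\theta_\infty$, so that continuity at $x$ follows from $r_i \to 0$ and no self-intersections arise either within or across the annular pieces.
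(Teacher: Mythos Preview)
Your proposal is correct and follows essentially the same local approach the paper invokes (it omits the proof, noting that the argument in \cite{Long--Margalit--Pham--Verberne--Yao} is purely local in a disk chart around $x$ and hence carries over verbatim). Your construction---extracting a subsequence with strictly decreasing radii and convergent angles, concatenating arcs in nested annuli to form an embedded arc $\bar\alpha$ limiting on $x$, and then closing it up with a complementary arc $\beta$---is exactly that local argument, and the subtleties you flag (embeddedness across annuli, continuity at $x$, existence of $\beta$ using that an arc does not separate a disk) are handled correctly.
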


Since the argument in \cite[Lemma 4.2]{Long--Margalit--Pham--Verberne--Yao} is 
based on the limit point compactness of surfaces,
we can show the same result for nonorientable surfaces.
\begin{lemma}[cf. {\cite[Lemma 4.2]{Long--Margalit--Pham--Verberne--Yao}}]\label{presv_conv_seq}
Let $N$ be a closed nonorientable surface. Automorphisms of $\mathcal{EC}^{\dagger}(N)$ preserve convergent sequences
\end{lemma}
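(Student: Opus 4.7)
The plan is to mimic the proof of \cite[Lemma 4.2]{Long--Margalit--Pham--Verberne--Yao}. Their argument uses only limit point compactness of the ambient surface and Lemma \ref{connects-the-dots_lemma}, both of which are available here: $N$ is closed (hence compact), and the connects-the-dots statement has already been ported.

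Concretely, I would fix $\alpha \in \mathrm{Aut}(\mathcal{EC}^{\dagger}(N))$ and a sequence $(c_i)$ with $\mathrm{lim}(c_i) = x \in N$, and aim to show that $(\alpha(c_i))$ converges in $N$. By the definition of convergence, $c_i$ bounds a disk in $N$ for every sufficiently large $i$, and a preliminary step is to observe that $\alpha$ sends disk-bounding curves to disk-bounding curves. The point here is that, by the very definition of $\mathcal{EC}^{\dagger}(N)$, the only inessential vertices are the disk-bounding ones (we have deliberately excluded curves bounding M\"obius bands), so the disk-bounding property admits the same graph-theoretic characterization as in the orientable case.

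I would then invoke limit point compactness of $N$: picking $p_i \in \alpha(c_i)$ for each sufficiently large $i$ yields a convergent subsequence $p_{i_k} \to y \in N$, and the remaining claim is that $\mathrm{lim}(\alpha(c_i)) = y$. Suppose not; then after passing to a further subsequence there exist $q_{j_k} \in \alpha(c_{j_k})$ converging to some $z \neq y$. The key observation is that any vertex $f \in \mathcal{EC}^{\dagger}(N)$ not passing through $x$ is disjoint from $c_i$ for cofinitely many $i$, because such $f$ avoids some neighborhood of $x$ into which the $c_i$ eventually retract. Hence, pulling back under $\alpha^{-1}$, producing two disjoint vertices of $\mathcal{EC}^{\dagger}(N)$ that each meet infinitely many $\alpha(c_i)$ will yield a contradiction. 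Such a pair is obtained by applying Lemma \ref{connects-the-dots_lemma} to the sequences $(p_{i_k})$ and $(q_{j_k})$, then modifying the resulting simple closed curves—using the flexibility afforded by $g \geq 3$—into essential curves $e, e'$ that are disjoint in $N$. The preimages $\alpha^{-1}(e)$ and $\alpha^{-1}(e')$ are then disjoint vertices of $\mathcal{EC}^{\dagger}(N)$ both passing through $x$, which is impossible.

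The main obstacle is the topological construction in the last step: Lemma \ref{connects-the-dots_lemma} by itself does not guarantee essentiality or disjointness of $e, e'$, so one must exploit $g \geq 3$ together with the abundance of disjoint pairs of two-sided essential simple closed curves in $N$ to upgrade the candidate curves to an essential disjoint pair. This is a purely topological adjustment insensitive to orientability, which is why the entire argument of \cite{Long--Margalit--Pham--Verberne--Yao} ports to the nonorientable setting without any new ingredients.
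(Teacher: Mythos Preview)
Your proposal is correct and follows the same approach as \cite{Long--Margalit--Pham--Verberne--Yao}, to which the paper simply defers (noting only that limit point compactness is the relevant ingredient). One simplification: what you flag as the ``main obstacle'' dissolves once you use that Lemma~\ref{connects-the-dots_lemma} is proved by a local argument---the curves $e,e'$ may be chosen inside disjoint disk neighborhoods of $y$ and $z$, so they bound disks (hence are already vertices of $\mathcal{EC}^{\dagger}(N)$) and are automatically disjoint, with no appeal to essentiality or to $g\geq 3$ needed; this is consistent with the fact that Lemma~\ref{presv_conv_seq} carries no genus hypothesis.
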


We say that two convergent sequences of vertices of $\mathcal{EC}^{\dagger}(N)$ are {\it coincident} if they converge to the same point of $N$. The {\it interleave} of two sequences $(c_{i})$ and $(d_{i})$ is a sequence $c_{1}$, $d_{1}$, $c_{2}$, $d_{2}$, $\cdots$.
We have the following corollary of Lemma~\ref{presv_conv_seq}.
\begin{corollary}[cf. {\cite[Corollary 4.3]{Long--Margalit--Pham--Verberne--Yao}}]\label{coincidence_conv_seq}
Let $N$ be a closed nonorientable surface. Let $(c_{i})$ and $(d_{i})$ be two convergent sequences of vertices of $\mathcal{EC}^{\dagger}(N)$. Then, $(c_{i})$ and $(d_{i})$ are coincident if the interleave of $(c_{i})$ and $(d_{i})$ is convergent. In particular, automorphisms of $\mathcal{EC}^{\dagger}(N)$ preserve coincidence of convergent sequences. 
\end{corollary}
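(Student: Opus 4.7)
The plan is to deduce the corollary from Lemma~\ref{presv_conv_seq} combined with the elementary observation that convergence in $\mathcal{EC}^{\dagger}(N)$ is cofinality-based: every neighborhood of the limit must contain all but finitely many of the curves in the sequence. Since this property passes freely to subsequences and to super-sequences constructed by interleaving, both halves of the statement should follow with minimal work on top of Lemma~\ref{presv_conv_seq}.

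First, to obtain the one-way implication, suppose the interleave $c_{1}, d_{1}, c_{2}, d_{2}, \ldots$ converges to some $x \in N$. For any neighborhood $U$ of $x$, all but finitely many terms of the interleave lie in $U$ by definition; since $(c_i)$ and $(d_i)$ each sit inside the interleave as subsequences, all but finitely many of the $c_i$ and all but finitely many of the $d_i$ must lie in $U$ as well. Hence both $(c_i)$ and $(d_i)$ converge to the common point $x$, i.e., are coincident. Uniqueness of the limit point is guaranteed by the Hausdorff property of $N$.

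Next, for the "in particular" clause, let $\alpha$ be an automorphism of $\mathcal{EC}^{\dagger}(N)$ and let $(c_i), (d_i)$ be coincident convergent sequences with common limit $x$. The same cofinality argument run in reverse shows that their interleave also converges to $x$. Applying Lemma~\ref{presv_conv_seq} to this interleave produces a convergent $\alpha$-image, which by construction is exactly the interleave of $(\alpha(c_i))$ and $(\alpha(d_i))$. Invoking the first half of the corollary then forces $(\alpha(c_i))$ and $(\alpha(d_i))$ to be coincident, as required.

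I do not anticipate a genuine obstacle here. The whole argument reduces to point-set considerations layered on top of Lemma~\ref{presv_conv_seq} and is insensitive to orientability or to the particular vertex set of $\mathcal{EC}^{\dagger}(N)$. The only subtlety worth flagging is a compatibility check: $\alpha$ respects interleaves term by term, so that Lemma~\ref{presv_conv_seq}, applied to the interleave of $(c_i)$ and $(d_i)$, genuinely delivers the interleave of $(\alpha(c_i))$ and $(\alpha(d_i))$ to which the first half of the corollary can be applied.
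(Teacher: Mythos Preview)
Your proposal is correct and matches the paper's approach: the paper omits the proof entirely, simply flagging the corollary as a consequence of Lemma~\ref{presv_conv_seq} and deferring to the identical argument in \cite{Long--Margalit--Pham--Verberne--Yao}, which is precisely the elementary subsequence/interleave reasoning you spell out. There is nothing to add.
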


We say that a sequence $(c_{i}^{1})$, $(c_{i}^{2})$, $(c_{i}^{3})$, $\cdots$ of a convergent sequences of vertices of $\mathcal{EC}^{\dagger}(N)$ {\it converges} if the sequence of limit points ${\rm lim}(c_{i}^{1})$, ${\rm lim}(c_{i}^{2})$, ${\rm lim}(c_{i}^{3})$, $\cdots$ converges to a point $x\in N$. In this case we say that the sequence converges to $x$. 

\begin{corollary}[cf. {\cite[Corollary 4.4]{Long--Margalit--Pham--Verberne--Yao}}]\label{presv_conv_seq_conv_seq}
Let $N$ be a closed nonorientable surface. Let $(c_{i})$ and $(d_{i})$ be two convergent sequences of vertices of $\mathcal{EC}^{\dagger}(N)$. Then, $(c_{i})$ and $(d_{i})$ are coincident if and only if the interleave of $(c_{i})$ and $(d_{i})$ is convergent. In particular, automorphism of $\mathcal{EC}^{\dagger}(N)$ preserve coincidence of convergent sequences. 
\end{corollary}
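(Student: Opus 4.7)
The statement of Corollary~\ref{presv_conv_seq_conv_seq} strengthens Corollary~\ref{coincidence_conv_seq} by upgrading the one-sided implication to a biconditional, and by correspondingly strengthening the consequence for automorphisms from preservation-in-one-direction to preservation-in-both-directions. Since the ``if'' direction is precisely the content of Corollary~\ref{coincidence_conv_seq}, the plan is to supply the missing ``only if'' direction and then combine it with Lemma~\ref{presv_conv_seq} and the already-established Corollary~\ref{coincidence_conv_seq} to obtain the two-sided preservation statement.

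For the ``only if'' direction, I would assume that $(c_{i})$ and $(c_{i}')$ are coincident, with common limit $x \in N$. Given any neighborhood $U$ of $x$, the definition of convergence provides integers $M_{1}, M_{2}$ such that the curve $c_{i}$ is contained in $U$ whenever $i > M_{1}$ and the curve $c_{j}'$ is contained in $U$ whenever $j > M_{2}$. Setting $M = \max(M_{1},M_{2})$, every term of the interleaved sequence $c_{1}, c_{1}', c_{2}, c_{2}', \ldots$ past position $2M$ lies in $U$; hence the interleave converges to $x$. This step is purely topological and uses nothing about orientability.

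For the ``in particular'' assertion, let $\alpha \in \mathrm{Aut}(\mathcal{EC}^{\dagger}(N))$ and suppose $(c_{i})$ and $(c_{i}')$ are coincident. The ``only if'' direction just proved renders the interleave convergent; Lemma~\ref{presv_conv_seq} then shows that its image under $\alpha$, which is the interleave of $(\alpha(c_{i}))$ and $(\alpha(c_{i}'))$, is also convergent; and Corollary~\ref{coincidence_conv_seq} (the already-proved ``if'' direction) concludes that $(\alpha(c_{i}))$ and $(\alpha(c_{i}'))$ are coincident. The reverse implication follows by applying the same argument to $\alpha^{-1}$.

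I do not anticipate any real obstacle here. The argument is entirely formal, and every ingredient it uses, namely Lemma~\ref{presv_conv_seq} and Corollary~\ref{coincidence_conv_seq}, has already been verified for closed nonorientable surfaces by the same local/limit-point-compactness reasoning as in the orientable case. Thus the corollary transfers to the nonorientable setting without any modification specific to one-sided curves or to inessential curves bounding a M\"obius band.
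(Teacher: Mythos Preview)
Your proof is correct and is precisely the expected argument. The paper itself omits all proofs in this section, stating that they are the same as in \cite{Long--Margalit--Pham--Verberne--Yao}, so your elementary verification---the trivial ``only if'' direction combined with Lemma~\ref{presv_conv_seq} and the already-established Corollary~\ref{coincidence_conv_seq}---is exactly what the paper defers to the reader.
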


We say that a vertex $c$ is a {\it limit curve} for a sequence $(c_{i})$ of vertices $\mathcal{EC}^{\dagger}(N)$ if ${\rm lim}(c_{i})\in c$. 
\begin{corollary}[cf. {\cite[Corollary 4.5]{Long--Margalit--Pham--Verberne--Yao}}]\label{presv_limit_curve}
Let $N$ be a closed nonorientable surface. Let $c$ be a limit curve for a sequence $(c_{i})$ of vertices and $\alpha$ an automorphism of $\mathcal{EC}^{\dagger}(N)$. Then, $\alpha(c)$ is a limit curve for $(\alpha(c_{i}))$.
\end{corollary}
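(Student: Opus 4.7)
The plan is to combine the convergence tools already developed with a local auxiliary construction that forces points of $\alpha(c)$ to converge to the desired limit. Let $x := \lim(c_{i}) \in c$ and $y := \lim(\alpha(c_{i}))$, where the latter exists by Lemma~\ref{presv_conv_seq}. Our task is to prove $y \in \alpha(c)$.

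First, I would construct a sequence $(d_{i})$ of vertices of $\mathcal{EC}^{\dagger}(N)$ with $\lim(d_{i}) = x$ and $d_{i} \cap c \neq \emptyset$ for every $i$. The construction is purely local: since $c$ passes through $x$, pick points $x_{i} \in c$ with $x_{i} \to x$; in a coordinate chart around each $x_{i}$ in which $c$ appears as a local arc through the origin, take $d_{i}$ to be a small circle centered at $x_{i}$, with radii tending to $0$. Each $d_{i}$ crosses $c$ transversely in two points, bounds a small disk in $N$, and is therefore an inessential disk-bounding vertex of $\mathcal{EC}^{\dagger}(N)$; clearly $d_{i} \to x$.

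Next, I would invoke Corollary~\ref{coincidence_conv_seq}. Since $(c_{i})$ and $(d_{i})$ are coincident, so are $(\alpha(c_{i}))$ and $(\alpha(d_{i}))$, giving $\lim(\alpha(d_{i})) = y$. Because $d_{i} \cap c \neq \emptyset$, the vertices $d_{i}$ and $c$ are non-adjacent in $\mathcal{EC}^{\dagger}(N)$, and as $\alpha$ is an automorphism, $\alpha(d_{i})$ and $\alpha(c)$ are non-adjacent, so $\alpha(d_{i}) \cap \alpha(c) \neq \emptyset$. Choose $p_{i} \in \alpha(d_{i}) \cap \alpha(c)$. Then $\alpha(d_{i}) \to y$ forces $p_{i} \to y$, and since $\alpha(c)$ is closed in $N$, we conclude $y \in \alpha(c)$, as desired.

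There is no serious obstacle to this argument; the local construction of $(d_{i})$ works identically on nonorientable surfaces, and one never needs to distinguish sides of $c$. The key idea, already present in the orientable case, is not to analyze $y$ from the outside but to exhibit points of $\alpha(c)$ that automatically converge to $y$ via the preservation of coincidence — and this explains why disk-bounding inessential curves must be admitted as vertices of $\mathcal{EC}^{\dagger}(N)$ in the first place.
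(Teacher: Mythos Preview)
Your argument is correct and matches the approach the paper defers to from \cite{Long--Margalit--Pham--Verberne--Yao}: one detects ``$\lim(c_i)\in c$'' via a coincident sequence of small disk-bounding curves meeting $c$, pushes this through $\alpha$ using Lemma~\ref{presv_conv_seq} and Corollary~\ref{coincidence_conv_seq}, and concludes by closedness of $\alpha(c)$. The paper omits the proof for exactly this reason, and your write-up fills it in faithfully.
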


We provide an outline of the proof of Theorem~\ref{thm:extended_graph}. 
We can prove the injectivity of $\nu$ as follows.

\begin{lemma}[cf. {\cite[Lemma 4.6]{Long--Margalit--Pham--Verberne--Yao}}]\label{inj_extended_fine_graph}
Let $N$ be a closed nonorientable surface of genus $g$. The natural map $\nu\colon \mathrm{Homeo}(N)\rightarrow\mathrm{Aut}(\mathcal{EC}^{\dagger}(N))$ is injective.
\end{lemma}

\begin{proof}
    We can prove same as in \cite{Long--Margalit--Pham--Verberne--Yao} as follows: Assume that $f \in \operatorname{Ker} \nu$. For every $x \in N$, there exist two curves $c,d \in \mathcal{C}^{\dagger}(N)$ such that $c \cap d = \{x\}$. Then $f(x)=f(c \cap d)=c \cap d = x$ and this implies that $f$ is the identity.
\end{proof}

Therefore it is sufficient to prove that $\nu$ is surjective. As in \cite{Long--Margalit--Pham--Verberne--Yao}, we can define
$\xi \colon {\rm Aut}(\mathcal{EC}^{\dagger}(N)) \to {\rm Homeo}(N)$ as follows: For $\alpha \in {\rm Aut}(\mathcal{EC}^{\dagger}(N))$, we define $\xi(\alpha) \in {\rm Homeo}(N)$ by $\xi(\alpha)(x)=\lim ( \alpha(c_i))$, where $(c_i)$ is a convergent sequence of $x \in N$.

As Claims 1 and 2 in the proof of \cite[Lemma 4.6]{Long--Margalit--Pham--Verberne--Yao}, we can observe the following:
\begin{enumerate}[(a)]
    \item For $\alpha \in {\rm Aut}(\mathcal{EC}^{\dagger}(N))$  and $c \in \mathcal{EC}^{\dagger}(N), \; \xi(\alpha)(c)=\alpha(c)$. \label{claim_a}
    \item For $f \in  {\rm Homeo}(N)$ and $c \in \mathcal{EC}^{\dagger}(N), \nu(f)(c)=f(c)$. \label{claim_b} 
\end{enumerate}
By using \ref{claim_a} and \ref{claim_b}, for every $\alpha \in {\rm Aut}(\mathcal{EC}^{\dagger}(N))$ and every $c \in \mathcal{EC}^{\dagger}(N)$, we have
\[ \nu(\xi(\alpha))(c)=\xi(\alpha)(c)=\alpha(c). \]
This implies that $\nu(\xi(\alpha))= \alpha$ and thus $\nu$ is surjective; Theorem~\ref{thm:extended_graph} holds.

\section{Automorphisms of the fine curve graph}

In this section, we prove Theorem~\ref{thm:main}. As same as Lemma~\ref{inj_extended_fine_graph}, the following lemma holds. 

\begin{lemma}[cf. {\cite[Lemma 5.1]{Long--Margalit--Pham--Verberne--Yao}}]\label{inj_fine_graph}
Let $N$ be a closed nonorientable surface. The natural map $\eta\colon \mathrm{Homeo}(N)\rightarrow\mathrm{Aut}(\mathcal{C}^{\dagger}(N))$ is injective.
\end{lemma}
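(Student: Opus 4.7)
The plan is the standard one for injectivity of this kind of natural map: assume $f \in \mathrm{Homeo}(N)$ lies in $\ker(\eta)$, so that $f(c) = c$ setwise for every vertex $c$ of $\mathcal{C}^{\dagger}(N)$, and deduce $f = \mathrm{id}_N$. Since $N$ is Hausdorff, it suffices to show $f(x) = x$ for every $x \in N$. This in turn reduces to exhibiting, for each $x \in N$, two essential simple closed curves $c$ and $d$ with $c \cap d = \{x\}$, since then $f(x) \in f(c) \cap f(d) = c \cap d = \{x\}$ forces $f(x) = x$.

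Fix $x \in N = N_g$ and take $g \geq 3$ (the case of interest for Theorem~\ref{thm:main}). To construct the required pair, I would choose an embedded one-holed torus $T \subset N$ whose interior contains $x$ and whose complement $N \setminus T$ is connected (possible since $g \geq 3$). Inside $T$, pick two simple closed curves $c$ and $d$ forming a degenerate torus pair meeting transversely exactly at $x$, essentially translates of a meridian and longitude of $T$ positioned to cross at $x$. Each of $c, d$ is nonseparating in $T$ and contained in the interior of $T$; since $N \setminus T$ is connected, this forces $c$ and $d$ to be nonseparating, hence essential, in $N$. Moreover, because $T$ is orientable and $c, d$ lie inside $T$, their regular neighborhoods in $N$ are annuli, so $c$ and $d$ are two-sided and therefore bona fide vertices of $\mathcal{C}^{\dagger}(N)$.

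With such $c$ and $d$ available at every $x$, the computation $f(x) \in f(c) \cap f(d) = \{x\}$ shows that $f$ is the identity pointwise, completing the proof. The main obstacle, and it is mild, is ensuring that the local construction goes through in the nonorientable setting with both curves two-sided and essential; placing them inside an orientable subsurface of $N$ resolves this automatically. The rest of the argument is formal and identical to the orientable case of \cite[Lemma 5.1]{Long--Margalit--Pham--Verberne--Yao}.
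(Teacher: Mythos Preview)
Your argument is correct and is exactly the standard proof that the paper defers to \cite[Lemma~5.1]{Long--Margalit--Pham--Verberne--Yao}: exhibit, for each point $x$, two essential simple closed curves whose intersection is $\{x\}$, and conclude that any $f\in\ker(\eta)$ fixes $x$. Your device of placing the pair inside an embedded one-holed torus (available since $N_g \cong T^2 \# N_{g-2}$ for $g\geq 3$, and since $\mathrm{Homeo}(N)$ acts transitively on points) cleanly guarantees both two-sidedness and essentiality, which is the only nonorientable wrinkle; the paper's omitted proof would proceed in the same way.

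One small remark: the lemma as stated in the paper is for an arbitrary closed nonorientable surface, whereas you explicitly restrict to $g\geq 3$. This is harmless for the paper's purposes (only $g\geq 4$ is ever used), and in fact your restriction can be dropped: for $g=1,2$ one may instead take two one-sided essential curves through $x$ meeting transversely in a single point (e.g., two ``diameters'' of $\mathbb{RP}^2$, or two core curves of the two crosscaps in the Klein bottle), and the same computation $f(x)\in f(c)\cap f(d)=\{x\}$ applies.
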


Before proving Theorem~\ref{thm:main}, we prepare a tool to prove it. Let $\Gamma$ be a graph and $\Delta\subset\Gamma$ a subgraph. A map $E\colon\mathrm{Aut}(\Delta)\rightarrow\mathrm{Aut}(\Gamma)$ is an {\it extension map} if for any $\varphi\in\mathrm{Aut}(\Delta)$, $E(\varphi)(\Delta)=\Delta$ and $E(\varphi)|_{\Delta}=\varphi$. 

From now on, we prove Theorem~\ref{thm:main}.
We follow the same strategy as in \cite{Long--Margalit--Pham--Verberne--Yao} for the proof.

\begin{proof}[Proof of Theorem~\ref{thm:main}]
We prove in two steps.
In the first step, we construct an extension homomorphism $\varepsilon\colon\mathrm{Aut}(\mathcal{C}^{\dagger}(N))\rightarrow\mathrm{Aut}(\mathcal{EC}^{\dagger}(N))$, and in the second step, we complete the proof by using the extension homomorphism $\varepsilon$.

{\it Step 1.}
Let $\alpha\in\mathrm{Aut}(\mathcal{C}^{\dagger}(N))$. We define $\hat{\alpha}\colon\mathcal{EC}^{\dagger}(N)\rightarrow\mathcal{EC}^{\dagger}(N)$ as follows. If $c$ is an essential curve on $N$, that is, if $c\in\mathcal{C}^{\dagger}(N)$, then $\hat{\alpha}(c)=\alpha(c)$. If $e$ is an inessential curve which bounds a disk in $N$, then we choose a nonseparating bigon pair $\{c,d\}$ determines $e$ and we correspond $\hat{\alpha}(e)$ to an inessential curve bounding a disk determined by the nonseparating bigon pair $\{\alpha(c), \alpha(d)\}$; this correspondence makes sense because of Proposition~\ref{presv_bigon_pairs}. We will verify $\hat{\alpha}$ is well-defined and 
an automorphism of $\mathcal{EC}^{\dagger}(N)$.

Suppose that $\{c',d'\}$ is another nonseparating bigon pair determines $e$. It follow from Corollary \ref{cor:fine_link_arc} that there exists a sequence of bigon pairs 
\begin{equation*}
\{c,d\}=\{c_{0},d_{0}\}, \{c_{1}, d_{1}\},\cdots, \{c_{n}, d_{n}\}=\{c',d'\},
\end{equation*}
where each pair $\{\{c_{i},d_{i}\}, \{c_{i+1}, d_{i+1}\}\}$ is a linked sharing pair for $e$. It follows from Proposition~\ref{presv_linked_sharing_pairs} that $\{\alpha(c'), \alpha(d')\}$ also determines the inessential curve $\hat{\alpha}(e)$ bounding a disk, and we see $\hat{\alpha}$ is well-defined.

Next we will show that $\hat{\alpha}$ is an automorphism.
Since $\hat{\alpha}$ is bijection on the set of vertices of $\mathcal{EC}^{\dagger}(N)$ it suffices to show that $\hat{\alpha}$ maps any edge to an edge. 

Let $a_{1}$ and $a_{2}$ are two distinct essential curves. Then $\hat{\alpha}(a_{1})=\alpha(a_{1})$ and $\hat{\alpha}(a_{2})=\alpha(a_{2})$, and so $\hat{\alpha}(a_{1})$ and $\hat{\alpha}(a_{2})$ form an edge if and only if $a_{1}$ and $a_{2}$ form an edge.

For the case where an essential curve $c$ and an inessential curve $e$ bounding a disk form an edge, we claim that we can take a nonseparating bigon pair $\{d_{1}, d_{2}\}$ which determines $e$ and disjoint from $c$. 
In fact, if $c$ is a separating curve on $N$, we set the two connected components $N'$ and $N''$ of $N\setminus c$, and we suppose that $e$ lies on $N'$.
Since $c$ is essential and $N$ is a closed nonorienatble surface, the genera of $N'$ and $N''$ are at most $2$. Hence we can take two-sided (note that bigon pairs are constructed by only two-sided curves) nonseparating curves $d_{1}$ and $d_{2}$ on the component $N'$ containing $e$ so that the pair $\{d_{1}, d_{2}\}$ is a bigon pair which determines $e$. If $c$ is a nonseparating curve on $N$, the complement $N\setminus c$ of $c$ in $N$ is homeomorphic to any one of $N_{g-1}^{1}$, $N_{g-2}^{2}$, $S_{\frac{g-1}{2}}^{1}$, or $S_{\frac{g-2}{2}}^{2}$. Since we assume that the genus of $N$ is at least $4$, we can take two-sided nonseparating curves $d_{1}$ and $d_{2}$ on the subsurface $N\setminus c$ so that the pair $\{d_{1}, d_{2}\}$ is a bigon pair which determines $e$, as desired.

\begin{figure}[h]
\begin{center}
\includegraphics[scale=0.75]{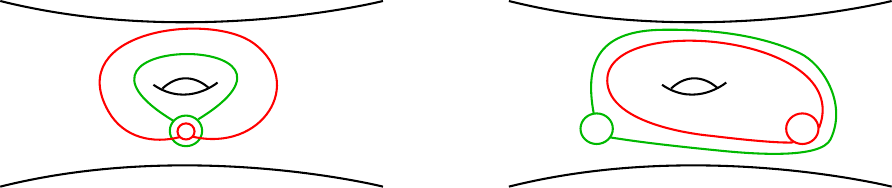}
\caption{Bigon pair for disjoint inessential curves bounding a disk in the nested case (left) and the unnested case (right).}\label{fig_two_bigon_pairs_v2}
\end{center}
\end{figure}

For the case where two inessential curves $e$ and $f$ which bound a disk form an edge, similar to the case of orientable surfaces we claim that two inessential curves $e$ and $f$ bounding a disk are disjoint if and only if the following holds up to relabeling $e$ and $f$: for every nonseparating bigon pair $\{c,d\}$ that determines $e$, there is a bigon pair $\{c',d'\}$ that determines $f$ and is disjoint from $e$. The forward direction is proved by direct construction (see Figure~\ref{fig_two_bigon_pairs_v2}), and note that since the genus of $N$ is at least $4$, $N$ has a subsurface homeomorphic to an orientable subsurface of genus $1$.
For the reverse direction, we assume that two inessential curves $e$ and $f$ which bound a disk intersect. We choose an intersection point $x\in e\cap f$. We can take a nonseparating bigon pair $\{c,d\}$ determining $e$ where $x$ is one of the vertices of the bigon. Let $\{c',d'\}$ be any bigon pair which determines $f$. Then $x\in f\subset c'\cup d'$, and so $c'\cup d'$ intersects $c$, as desired. By this claim, we see that $\hat{\alpha}$ preserves the set of edges spanned by two inessential curves bounding a disk. Therefore we see that $\hat{\alpha}$ is an automorphism of $\mathcal{EC}^{\dagger}(N)$, in particular $\hat{\alpha}$ is an automorphism of $\mathcal{C}^{\dagger}(N)$. 

By the definition of $\varepsilon\colon\mathrm{Aut}(\mathcal{C}^{\dagger}(N))\rightarrow\mathcal{EC}^{\dagger}(N)$ given by $\varepsilon(\alpha)=\hat{\alpha}$ is the desired extension map.

{\it Step 2.}
Recall that $\eta\colon\mathrm{Homeo}(N)\rightarrow\mathrm{Aut}(\mathcal{C}^{\dagger}(N))$ and $\nu\colon\mathrm{Homeo}(N)\rightarrow\mathrm{Aut}(\mathcal{EC}^{\dagger}(N))$ are the natural homomorphisms. By Theorem~\ref{thm:extended_graph}, $\nu$ is an isomorphism. Let $\varepsilon$ be the extension homomorphism constructed in the first step. 

As with \ref{claim_b} in the previous section, the following holds:
\begin{enumerate}[(a)]
\setcounter{enumi}{2}
    \item For $f \in  {\rm Homeo}(N)$ and $c \in \mathcal{C}^{\dagger}(N)$, $\eta(f)(c)=f(c)$. \label{claim_c}
\end{enumerate}
By \ref{claim_a} and \ref{claim_c}, for every $\alpha \in {\rm Aut}(\mathcal{C}^{\dagger}(N))$ and every $c \in \mathcal{C}^{\dagger}(N)$, we have
\[ \eta(\xi(\varepsilon(\alpha)))(c)=\xi(\varepsilon(\alpha))(c)=\varepsilon(\alpha)(c)=\alpha(c). \]
This implies that $\eta(\xi(\varepsilon(\alpha)))= \alpha$ and thus $\eta$ is surjective.
Since $\eta$ is injective by Lemma~\ref{inj_fine_graph}, 
we have finished the proof.
\end{proof}

\begin{remark}\label{remark_for_extended_graph}
In the definition of $\mathcal{EC}^{\dagger}(N)$, we excluded curves that bound a M\"{o}bius band and allowed only those that bound a disk as inessential curves. 
This is done to avoid the difficulty of including curves that bound a M\"{o}bius band, and it is sufficient for the proof of our main theorem.

\end{remark}

\begin{remark}\label{remark_for_genus}
In the part where we verify $\hat{\alpha}$ is bijection for the edges between one essential curve and one inessetial curve bounding a disk in Proof of Theorem~\ref{thm:main}, if the genus of a closed nonorientable surface is $g\leq 3$, we can not take a bigon pair $\{d_{1},d_{2}\}$ which is disjoint from $c$ in the case where $c$ is nonseparating. In fact, in our definition of bigon pairs $\{d_{1},d_{2}\}$, the curves $d_{1}$ and $d_{2}$ are two-sided curves, and by Proposition~\ref{presv_bigon_pairs} we see that automorphisms of the fine curve graph $\mathcal{C}^{\dagger}(N)$ preserve ``nonseparating'' bigon pairs. Therefore, we should take two-sided and nonseparating curves $d_{1}$ and $d_{2}$ on $N\setminus c$ which is homeomorphic to $N_{g-1}^{1}$, $N_{g-2}^{2}$, $S_{\frac{g-1}{2}}^{1}$, or $S_{\frac{g-2}{2}}^{2}$. If $g\leq 3$, $N\setminus c$ can be $N_{1}^{1}$, $N_{1}^{2}$, $S_{0}^{1}$, or $S_{0}^{2}$, and we can not take two-sided essential curves in these surfaces (see~\cite[Section 2.4]{Atalan--Korkmaz14} for example).
\end{remark}

\section*{Acknowledgment}
The authors wish to express their great appreciation to Genki Omori for valuable comments.
The first author is supported by 
JST-Mirai Program Grant Number JPMJMI22G1.
The second author is supported by JSPS KAKENHI, grant number 21K13791.


\begin{thebibliography}{99}


\bibitem{Atalan--Korkmaz14}
F. Atalan and M. Korkmaz, {\it Automorphisms of curve complexes on nonorientable surfaces}, Groups Geom.\ Dyn.\ {\bf 8} (2014), no. 1, 39--68.






\bibitem{Bowden--Hensel--Webb22}
J. Bowden, S. Hensel, and R. Webb, {\it Quasi-morphisms on surface diffeomorphism groups}, J.\ Amer.\ Math.\ Soc.\ {\bf 35} (2022), 211--231.



















\bibitem{Ivanov97}
N.\ V.\ Ivanov, {\it Automorphism of complexes of curves and of
Teichm\"{u}ller spaces}, Internat.\ Math.\ Res.\ Notices (1997), no.14,
651--666.



\bibitem{Kuno16}
E. Kuno, {\it Uniform hyperbolicity for curve graphs of non-orientable surfaces}, Hiroshima Math.\ J.\ {\bf 46} (2016), no. 3, 343-–355.




\bibitem{Long--Margalit--Pham--Verberne--Yao}
A. Long, D. Margalit, A. Pham, Y. Verberne, and C. Yao, {\it Automorphisms of the fine curve graph}, Transactions of the American Mathematical Society, to appear.











\bibitem{Roux--Wolff22}
F. L. Roux and M. Wolff, {\it Automorphisms of some variants of fine graphs}, arXiv:2210.05460.










\end{thebibliography}
\end{document}